\DeclareMathOperator{\Jac}{Jac}
\DeclareMathOperator{\USp}{USp}
\DeclareMathOperator{\SU}{SU}
\DeclareMathOperator{\Sp}{Sp}
\DeclareMathOperator{\GSp}{GSp}
\DeclareMathOperator{\U}{U}
\DeclareMathOperator{\GL}{GL}
\DeclareMathOperator{\SL}{SL}
\DeclareMathOperator{\ST}{ST}
\DeclareMathOperator{\Gal}{Gal}
\DeclareMathOperator{\End}{End}
\DeclareMathOperator{\Aut}{Aut}
\DeclareMathOperator{\diag}{diag}
\DeclareMathOperator{\TL}{TL}
\DeclareMathOperator{\AST}{AST}
\DeclareMathOperator{\Hg}{Hg}
\DeclareMathOperator{\LL}{L}
\DeclareMathOperator{\tr}{tr}
\DeclareMathOperator{\MT}{MT}
\newcommand{\Z}{\mathbb{Z}}
\newcommand{\Q}{\mathbb{Q}}
\newcommand{\C}{\mathbb{C}}
\newtheorem{theorem}{Theorem}[section]
\newtheorem{proposition}[theorem]{Proposition}
\newtheorem{lemma}[theorem]{Lemma}
\newtheorem{corollary}[theorem]{Corollary}
\newtheorem{conjecture}[theorem]{Conjecture}
\theoremstyle{remark}
\newtheorem*{remark*}{Remark}
\numberwithin{equation}{section}
\author{Heidi Goodson}
\address{Department of Mathematics, Brooklyn College; 2900 Bedford Avenue, Brooklyn, NY 11210 USA}
\email{heidi.goodson@brooklyn.cuny.edu}
\subjclass[2020]{11G10, 14C30, 11F80, 14K22}
\keywords{Hodge ring, Mumford-Tate group, Hodge group, Sato-Tate group}
\title{An exploration of degeneracy in abelian varieties of Fermat type}
\begin{document}

\maketitle

\begin{abstract}
    The term \emph{degenerate} is used to describe abelian varieties whose Hodge rings contain exceptional cycles -- Hodge cycles that are not generated by divisor classes. We can see the effect of the exceptional cycles on the structure of an abelian variety through its Mumford-Tate group, Hodge group, and Sato-Tate group. In this article we examine degeneracy through these different but related lenses. We  specialize to a family of abelian varieties of Fermat type, namely Jacobians of hyperelliptic curves of the form $y^2=x^m-1$. We prove that the Jacobian of the curve is degenerate whenever $m$ is an odd, composite integer. We explore the various forms of degeneracy for several examples, each illustrating different phenomena that can occur.
\end{abstract}

\section{Introduction}

We say that a complex abelian variety $A$ is \emph{nondegenerate} if its (complexified) Hodge ring is generated by divisor classes. In this article we are interested in studying abelian varieties that do not have this property. Abelian varieties whose Hodge rings are not generated by divisor classes are called \emph{degenerate}, and the Hodge cycles not coming from the divisor classes are called \emph{exceptional cycles}. 

While the definition of degeneracy is a statement about the Hodge ring, we can see the effects of degeneracy in  groups constructed from the Hodge structure of the abelian variety: the Mumford-Tate group, the Hodge group, and the Sato-Tate group. The Mumford-Tate group and Hodge group are related to the Hodge ring via an action on certain cohomology groups. The algebraic Sato-Tate conjecture and Mumford-Tate conjecture imply a relationship between the Sato-Tate group and the Mumford-Tate and Hodge groups. Thus, it is natural that  degeneracies of these groups should be intertwined with degeneracies in the Hodge ring.

In this paper we focus on nonsimple abelian varieties and study how the Hodge structures of their simple factors interact with each other. This is particularly interesting in the case where one of the factors is of type IV in the Albert's classification since the product of nondegenerate simple abelian varieties is also nondegenerate as long as none of the factors are of this type (see Theorem 0.1 of \cite{Hazama89}). Work of Moonen and Zarhin \cite{MoonenZarhin99} gives a complete classification of degeneracy in dimensions 4 and 5 that is based on the decomposition of an abelian variety into its simple factors. There are some explicit examples of degenerate abelian varieties in the literature where the focus is on the associated Hodge rings (see, for example, \cite{Aoki2002, Pohlmann1968, Shioda82}). Recent work of Lombardo \cite{Lombardo2021} examines degeneracy in the Mumford-Tate group of a dimension 4 Jacobian variety. This phenomenon does not occur in lower dimensions since the Hodge ring of an abelian variety of dimension $\leq 3$ is always generated by divisor classes.

In this paper we build on these results by considering how the degeneracy of the Hodge ring can been seen in the Mumford-Tate group,  Hodge group, and  Sato-Tate group within a particularly interesting family of abelian varieties of Fermat type: Jacobians of curves of the form $C_m\colon y^2=x^m-1$. This family of curves was studied by Shioda in \cite{Shioda82}, where he proved the Hodge conjecture for their Jacobian varieties. The Hodge conjecture claims that the Hodge cycles of an abelian variety are all algebraic cycles, and one way to verify this is by showing that all of the Hodge cycles are generated by divisor classes  and then using the fact that the divisor classes are all algebraic. Shioda, on the other hand, proves the Hodge conjecture for these Jacobian varieties without appealing to divisor classes, and in certain cases he proves that the Hodge rings have exceptional cycles. In this paper we prove the following extension of his result.

\begin{theorem}\label{thm:degeneracyxm}
Let $m$ be an odd composite integer. Then the Jacobian $\Jac(C_m)$ of the curve $y^2=x^m-1$ is degenerate in the sense that the Hodge ring contains exceptional cycles.
\end{theorem}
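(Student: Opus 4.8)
The plan is to recast degeneracy as a combinatorial property of characters of the Mumford--Tate torus, reduce to two families of base cases, and then write down an explicit exceptional cycle.

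\textbf{Setup and criterion.} Following Shioda \cite{Shioda82}, the order-$m$ automorphism $(x,y)\mapsto(\zeta_m x,y)$ of $C_m$ makes $\Jac(C_m)$ a CM abelian variety, isogenous to $\prod_{d\mid m,\,d>1}P_d$ where $P_d$ has complex multiplication by $\Q(\zeta_d)$. Reading the eigenvalues of this automorphism on a basis of holomorphic differentials $x^{i}\,dx/y$ shows $H^1(\Jac(C_m),\C)=\bigoplus_{a\in\Z/m\setminus\{0\}}\C_a$ with one-dimensional eigenspaces $\C_a$, and $\C_a\subset H^{1,0}$ exactly when $1\le a\le(m-1)/2$; call this index set $\Phi$. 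The Galois group $G=(\Z/m)^\times=\Gal(\Q(\zeta_m)/\Q)$ permutes the $\C_a$ by $a\mapsto ga$, and complex conjugation sends $\C_a$ to $\C_{-a}$. Since $\Jac(C_m)$ is CM, the standard dictionary between Hodge classes on powers of a CM abelian variety and characters of its reflex torus (as in \cite{Shioda82}) reads here: a multiset $M\subset\Z/m\setminus\{0\}$ produces a nonzero Hodge class on a power of $\Jac(C_m)$ precisely when $\#(M\cap g\Phi)=\tfrac12\#M$ for every $g\in G$, and such a Hodge class lies in the subring generated by divisor classes precisely when $M$ can be partitioned into conjugate pairs $\{a,-a\}$ (strictly, into pairs $\{a,a'\}$ for which $\C_a\wedge\C_{a'}$ already spans a degree-two Hodge class, i.e.\ with $g\mapsto\mathbf 1_\Phi(g^{-1}a')$ equal to $g\mapsto\mathbf 1_\Phi(-g^{-1}a)$; by Lefschetz $(1,1)$ every degree-two Hodge class on $\Jac(C_m)$ is a divisor class, which is why this is the only subtlety). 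So it suffices to produce, for each odd composite $m$, a balanced multiset $M$ admitting no such pairing.

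\textbf{Reduction to two base cases.} If $m$ is odd and composite then $p^2\mid m$ for some odd prime $p$, or $pq\mid m$ for distinct odd primes $p<q$; set $m'=p^2$ or $m'=pq$ accordingly. The morphism $C_m\to C_{m'}$, $(x,y)\mapsto(x^{m/m'},y)$, pulls back to a homomorphism $\Jac(C_{m'})\to\Jac(C_m)$ with finite kernel, so up to isogeny $\Jac(C_{m'})$ is a direct factor of $\Jac(C_m)$. Degeneracy is inherited from an isogeny factor: an exceptional cycle $\xi$ on a power of the factor, pulled back along a projection and then restricted along a section, is again $\xi$, so its pullback cannot be a polynomial in divisor classes of $\Jac(C_m)$. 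Hence we may assume $m=p^2$ or $m=pq$.

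\textbf{The exceptional cycle.} For $m=p^2$ I would take
\[
M=\{\,p\,\}\cup\{\,p-1,\ 2p-1,\ \dots,\ p^2-1\,\},
\]
of size $p+1$: the order-$p$ index $p$ together with the coset of $\Gal(\Q(\zeta_{p^2})/\Q(\zeta_p))$ congruent to $-1$ modulo $p$. A short count of how many elements of a coset of that subgroup land in an interval of length $(p^2-1)/2$ shows the count is $(p+1)/2$ or $(p-1)/2$ according to the residue mod $p$, and the $-1\bmod p$ coset is calibrated so that $\#(M\cap g\Phi)=\tfrac{p+1}{2}$ for every $g\in(\Z/p^2)^\times$; thus $M$ is balanced. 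But $p$ cannot be paired: its only possible partner among order-$p$ indices is $-p=(p-1)p\notin M$ (multiplication by a unit $c$ preserves $\{1,\dots,(p-1)/2\}$ only for $c=1$), and no primitive index of order $p^2$ can be its partner, since the required equality of functions on $G$ would be constant on $\Gal(\Q(\zeta_{p^2})/\Q(\zeta_p))$-cosets, which it is not. So $M$ is exceptional and $\Jac(C_{p^2})$ is degenerate. The case $m=pq$ is analogous but needs a more careful choice of $M$; for example $M=\{3,6,8,13\}\subset\Z/15$ (two indices of order $q$, two of order $pq$) works for $m=15$, and the same interval bookkeeping produces a suitable $M$ in general.

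\textbf{The main obstacle.} The real work is in the last step: exhibiting a single explicit $M$ for which $\#(M\cap g\Phi)=\tfrac12\#M$ holds identically in $g$, uniformly in $p$ and in $(p,q)$, together with checking that no admissible pairing exists---and making sure that ``no pairing'' genuinely certifies the class is not a polynomial in divisor classes, which is where the precise form of the CM dictionary and Lefschetz $(1,1)$ in degree two are used. The decomposition into the $P_d$, the character computation, and the reduction step are all standard or routine.
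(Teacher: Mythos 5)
Your overall scaffolding is sound and runs parallel to the paper's: reduce to a base case $m'=p^2$ or $m'=pq$ dividing $m$ via $(x,y)\mapsto(x^{m/m'},y)$, and transport exceptional classes from the factor to $J_m$ -- your ``pull back along the projection, restrict along a section'' argument is in fact a correct and slightly more careful justification of the inheritance step that the paper states in one line. The CM dictionary you invoke (balanced index sets give Hodge classes; a monomial class lies in the divisor subring iff its index set splits into pairs that already span degree-two Hodge classes, with Lefschetz $(1,1)$ guaranteeing $\mathscr B^1=\mathscr D^1$) is the right criterion for these eigenclasses, though you should phrase it for subsets of $\Z/m\setminus\{0\}$ and classes on $J_{m'}$ itself rather than ``multisets on a power'': the theorem is about $\mathscr B^*(J_m)$, not stable degeneracy (your actual constructions are subsets, so this is only a phrasing slip). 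Your explicit family for $m=p^2$ does check out: each unit coset of $\ker\bigl((\Z/p^2)^\times\to(\Z/p)^\times\bigr)$ meets $\Phi$ in $(p\pm1)/2$ elements, which gives the balance, and $p$ admits no admissible partner because $g\mapsto\mathbf 1_\Phi(-gp)$ is constant on those cosets while $g\mapsto\mathbf 1_\Phi(ga')$ is not; for $p=3$ your set $\{2,3,5,8\}$ is exactly one of Shioda's two exceptional classes on $J_9$. This explicit construction is a genuinely different route from the paper's for prime-power $m$.

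The genuine gap is the one you flag yourself: when $m$ is squarefree with two distinct odd prime factors (so $m=15,21,33,35,\dots$), your reduction needs an exceptional class on $J_{pq}$ with $p<q$ distinct, and you never produce one in general -- a single verified instance for $m=15$ plus ``the same interval bookkeeping produces a suitable $M$'' is not a proof, and these $m$ are not reachable from your $p^2$ family (which only covers $m$ divisible by a square). This is precisely the step the paper gets for free from Shioda: Proposition \ref{prop:exceptionalHodge} applies Lemma 5.5 of \cite{Shioda82}, which bounds the number of indecomposable classes by $N_{pq}(d)\geq q-1$ for $d=(p+1)/2$ once $\gcd(q,d!)=1$ (arranged by taking $q\geq p$), and positivity of that count already yields classes in $\mathscr B^d(J_{pq})$ not generated in lower codimension, hence exceptional, with no explicit cycle needed. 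To complete your route you would have to prove a uniform combinatorial construction of a balanced, unpairable index set for every $pq$ -- essentially reproving Shioda's indecomposability bound -- or else cite that bound, at which point your argument reduces to the paper's.
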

We prove this result by first showing that, for any odd primes primes $p\leq q$ such that the product $pq$ divides $m$, the Jacobian of $y^2=x^{pq}-1$ can be viewed as a factor of $\Jac(C_m)$ (note that this includes the case where $p=q$ and so $p^2$ divides $m$). We then apply Lemma \ref{lemma:exceptionalHodge}, where we prove that $\Jac(C_{pq})$ has exceptional Hodge cycles.

With the general result of Theorem \ref{thm:degeneracyxm} in hand, we aim to understand how  degeneracy in the Hodge ring appears as  degeneracy in the Mumford-Tate, Hodge, and Sato-Tate groups of the Jacobian varieties. The Mumford-Tate and Hodge groups of degenerate abelian varieties are, in some sense, ``smaller" than what we would see for a nondegenerate abelian variety. The Sato-Tate group of a degenerate abelian variety will have a ``smaller" identity component and, potentially, a component group that is ``larger" than what we would see for a nondegenerate CM abelian variety (i.e., one that is larger than the Galois group of the CM field over the base field).

Determining Sato-Tate groups of abelian varieties is the source of ongoing interest and work. There has been recent progress on computing Sato-Tate groups of nondegenerate abelian varieties (see, for example, \cite{Arora2018, EmoryGoodson2020,emorygoodsonnondegeneracy,FiteGonzalezLario2016, FKS2012,SutherlandGenus3, FiteLorenzoSutherland2018, GoodsonCatalan, LarioSomoza2018}). A nice property of nondegenerate abelian varieties is that the component group of the Sato-Tate group is isomorphic to $\Gal(K/F)$, where $K$ is the endomorphism field (i.e., the minimal extension over which all the endomorphisms of the abelian variety are defined) and $F$ is the field of definition of the abelian variety -- this is due to the fact that the Sato-Tate group of the abelian variety is connected over this field. However, the landscape for degenerate abelian varieties is still wide open. In particular, there are no examples in the literature where the component groups for these degenerate Sato-Tate groups are computed. One difficulty is that we may need a larger field than the endomorphism field in order to have a connected Sato-Tate group, and it's not clear what field is needed. In this paper we have several results regarding the identity component of the Sato-Tate group (see Propositions \ref{prop:STidentity9} and \ref{prop:STidentity15}) as well as the following conjecture regarding the full Sato-Tate group.

\begin{restatable*}{conjecture}{conjectureST}
{\label{conjectureST9}}
Let $\Jac(C_9)$ denote the Jacobian of the curve $y^2=x^9-1$. Up to conjugation in $\USp(8)$, the Sato-Tate group of $\Jac(C_9)$  is $$\ST(\Jac(C_9)) \simeq \left\langle \diag(U_1,U_2,U_3,\overline U_1U_2U_3), \gamma\right\rangle,$$
where $\gamma$ is defined in Equation \eqref{eqn:J9gamma} and $U_i$ is defined in the Notation and Conventions section. 
\end{restatable*}

In addition to this example, we explore degeneracy for other values of $m$, carefully examining the Mumford-Tate groups, the Hodge rings and Hodge groups, and the Sato-Tate groups of the varieties. We see different phenomena occurring for different values of $m$, which makes this an intriguing problem to study.

\subsection*{Organization of the paper}

In Section \ref{sec:background} we provide background information for the main objects of study. 
In Section \ref{sec:Degeneracy} we take a deep dive into degeneracy. We begin by generalizing a result of Lombardo \cite{Lombardo2021} for degeneracy in Mumford-Tate groups. We prove Lemma \ref{lemma:exceptionalHodge}, which states that $\Jac(C_{pq})$ has exceptional Hodge cycles for any odd primes $p,q$ (not necessarily distinct). We then explain how the Hodge ring and Hodge group are related to each other. Finally, we describe how degeneracy can affect the Sato-Tate group of an abelian variety.
In Section \ref{sec:JacobianSplitting} we prove results for the decompositions of the Jacobian varieties into simple factors and we prove Theorem \ref{thm:degeneracyxm}.
In Section \ref{sec:DegenerateExamples} we work on explicit examples that demonstrate various phenomena that can occur for degenerate abelian varieties. For each example, we examine the Mumford-Tate group, Hodge ring, and Hodge group. For $m=9$ and $m=15$, we also study the Sato-Tate groups. While the Sato-Tate group of $\Jac(C_{15})$ is somewhat mysterious, we show that it is less mysterious for $\Jac(C_{9})$. We make a conjecture for the full Sato-Tate group of $\Jac(C_{9})$ and provide moment statistics to support the conjecture.

\subsection*{Notation and conventions}
For any $K$-vector space $W$ and $K$-algebra $R$, let $W_R:=W\otimes_K R.$

The curve $y^2=x^m-1$ is denoted by $C_{m}$ and its Jacobian is denoted by $J_m$. We  write $\zeta_m$ for a primitive $m^{th}$ root of unity. 

Let $I$ denote the $2\times 2$ identity matrix and define the matrix
\begin{align*}
    J=\begin{pmatrix}0&1\\-1&0\end{pmatrix}.
\end{align*}
We embed $\U(1)$ in $\SU(2)$ via $u\mapsto U= \diag(u, \overline u)$ 
and, for any positive integer $n$,  define the following subgroup of the unitary symplectic group $\USp(2n)$
\begin{equation*}\label{eqn:U1n}
    \U(1)^n:=\left\langle \diag( U_1, U_2,\ldots, U_n)\;|\; U_i\in \U(1)\right\rangle.
\end{equation*}

\section{Background}\label{sec:background}

\subsection{Hodge Structures}\label{sec:backgroundHodgeStructure}

In this section, we follow the exposition in Chapters 1 and 17 of \cite{BirkenhakeLange2004}. 

Let $A$ be a projective abelian variety over $\mathbb C$. We denote the first homology group of $A$ by $V(A):=H_1(A,\Q)$ and its dual (the first cohomology group) by $V^*(A):=H^1(A,\Q)$. The complex vector space $V(A)_\C$ has a weight $-1$ Hodge structure, i.e., a decomposition $V(A)_\C= V(A)^{-1,0}\oplus V(A)^{0,-1}$ where  $\overline{V(A)^{-1,0}}=V(A)^{0,-1}$. This corresponds to the following weight 1 Hodge structure of its dual
$$V^*_\C =H(A)^{1,0}\oplus H(A)^{0,1},$$
where $V(A)^{-1,0}=H^{1,0}(A)^*$ and $V(A)^{0,-1}=H^{0,1}(A)^*$. The notation in the decomposition of $H^1(A,\Q)$ is defined by $H^{a,b}(A)=H^a(\Omega^b_A),$ where $\Omega_A^b$ is the sheaf of holomorphic $b$-forms on $A$. We can also define $H^{a,b}(A)$ by
\begin{align}\label{eqn:Hodgeab}
    H^{a,b}(A)&\simeq \bigwedge^a H^{1,0}(A) \otimes \bigwedge^b H^{0,1}(A).
\end{align}
Hodge structures of weight $n$, $H^n(A,\C)\simeq \bigwedge^n H^1(A,\C)$, satisfy $ H^n(A,\C)\simeq \bigoplus_{a+b=n} H^{a,b}(A).$

\subsection{Hodge and Mumford-Tate Groups}\label{sec:backgroundHodgeMT}

The Hodge structure of the previous section determines a representation $\mu_{\infty,A}: \mathbb G_{m,\C} \rightarrow \GL_{V_\C}$ acting as multiplication by $z$ on $V(A)^{-1,0}$ and trivially on $V(A)^{0,-1}$. 
With this setup, we define the {Mumford-Tate group} of $A$, denoted $\MT(A)$, to be the smallest $\Q$-algebraic subgroup of $\GL_{V}$  such that $\mu_{\infty,A}(\mathbb G_{m,\C})\subseteq \MT(A)_\C$. We define the Hodge group of $A$ to be the connected component of the identity of $\MT(A)\cap \SL_{V}$. 

The Hodge group can also be formed by restricting the representation $\mu_{\infty,A}$ to the (real Lie) circle group $\mathbb S^1:=\{z\in\C\mid |z|=1\}$. With this setup, the Hodge group is the smallest $\Q$-algebraic subgroup of $\GL_{V}$ such that  $\mu_{\infty,A}(\mathbb S^1)\subseteq \Hg(A)_\C$. The image of this restriction of $\mu_{\infty,A}$ lies in $\SL_{V_\C}$, and so the Hodge group is a $\Q$-algebraic subgroup of $\SL_{V}$. In fact, one can show that the image of the representation $\mu_{\infty,A}$ is contained in the symplectic group $\GSp_{V_\C}$, taken with respect to the symplectic form given by the block matrix $\diag(J,\ldots,J)$. Hence, $\MT(A)$ and $\Hg(A)$ are $\Q$-algebraic subgroups of $\GSp_{V}$ and $\Sp_{V}$, respectively.

In our work we will be interested in nonsimple abelian varieties. For $n\geq 1$, we can identify $\Hg(A^n)$ with $\Hg(A)$ and the action is performed diagonally on $V(A^n)=(V(A))^n$. More generally, for $n_1,n_2,\ldots, n_t\geq 1$, $\Hg(A_1^{n_1}\times A_2^{n_2}\times \cdots \times A_t^{n_t})   $ is isomorphic to $\Hg(A_1\times A_2\times \cdots \times A_t)$. Even more generally, if $A$ and $B$ are abelian varieties then $\Hg(A\times B)\subseteq \Hg(A)\times \Hg(B)$. In our work we will be interested in studying cases where the containment is strict.

\subsection{The Hodge Ring}\label{sec:backgroundHodgeRing}
In this section we use the notation found in \cite{Shioda82}.  We denote the (complexified) Hodge ring of $A$ by
$$\mathscr B^*(A):=\displaystyle\sum_{d=0}^{\dim(A)} \mathscr B^d(A), $$
where $\mathscr B^d(A)=(H^{2d}(A,\mathbb Q)\cap H^{d,d}(A))\otimes \mathbb C$ is the $\mathbb C$-span of Hodge cycles of codimension $d$ on $A$.  
The subring of $\mathscr B^*(A)$ generated by the divisor classes, i.e.  generated by $\mathscr B^1(A)$ , is
$$\mathscr D^*(A):=\displaystyle\sum_{d=0}^{\dim(A)} \mathscr D^d(A),$$ 
where $\mathscr D^d(A)$ is the $\mathbb C$-span of classes of intersection of $d$ divisors. 

The relationship between these spaces is relevant to the Hodge conjecture: Let $\mathscr C^d(A)$ be the subspace of $\mathscr B^d(A)$ generated by the classes of algebraic cycles on $A$ of codimension $d$. Then 
$$\mathscr D^d(A) \subseteq \mathscr C^d(A) \subseteq \mathscr B^d(A)$$
and the Hodge conjecture for $A$ asserts that every Hodge cycle is algebraic: $\mathscr C^d(A) = \mathscr B^d(A)$ for all $d$ (see \cite{Aoki2002, Shioda82}). One way to prove the Hodge conjecture in codimension $d$ is to prove the equality $\mathscr D^d(A) =\mathscr B^d(A)$. However this equality does not always hold, even when the Hodge conjecture is known to be true. In Section \ref{sec:DegeneracyHodgeRing} we will further study the relationship between $\mathscr D^d(A)$ and $\mathscr B^d(A)$.

\subsection{An $\ell$-adic construction of the Sato-Tate group}\label{sec:backgroundSTgroup}
We follow the exposition of \cite{EmoryGoodson2020} and \cite[Section 3.2]{SutherlandNotes}. See also \cite[Chapter 8]{SerreNXP}.

For any prime $\ell$, we define the Tate module $T_{\ell}:=\varprojlim_{n} A[\ell^n]$; this is a free $\mathbb{Z}_{\ell}$-module of rank $2g$. We define the rational Tate module $V_{\ell}:=T_{\ell}\otimes_{\mathbb{Z}} \mathbb{Q}$, which is a $\mathbb{Q}_{\ell}$-vector space of dimension $2g.$ The Galois action on the Tate module is given by an $\ell$-adic representation 
\begin{align}\label{eqn:artinrepresntation}
    \rho_{A,\ell}:\Gal(\overline{F}/F) \rightarrow \Aut(V_{\ell}) \cong \GL_{2g,\mathbb{Q}_{\ell}}.
\end{align}
The $\ell$-adic monodromy group of $A$, denoted $G_{A,\ell}$, is the Zariski closure of the image of this map in $\GL_{2g,\mathbb{Q}_{\ell}}$, and we define $G^{1}_{A,\ell}:=G_{A,\ell}\cap \Sp_{2g,\mathbb{Q}_{\ell}}$. We define the {Sato-Tate group} of $A$, denoted   $\ST(A)$, to be a maximal compact Lie subgroup of $G^{1}_{A,\ell}\otimes_{\mathbb{Q}_{\ell}}\mathbb{C}$ contained in $\USp(2g)$. 

It is conjectured that $\ST(A)$ is independent of the choice of the prime $\ell$ and of the embedding of $\mathbb{Q}_\ell$ in $\mathbb{C}$; this is known to be true in many cases, such as in dimension $\leq 3$ and for products of CM abelian varieties (see, for example, \cite{Banaszak2015, Joh17}). While the Sato-Tate group is a compact Lie group, it may not be connected. We denote  the connected component of the  identity  (also called the identity component) of $\ST(A)$ by  $\ST^0(A)$.

\subsection{The Mumford-Tate and Algebraic Sato-Tate Conjectures}\label{sec:backgroundMTConjecture}
Deligne showed in \cite{Deligne1982}  that the identity component of $G_{A,\ell}$ is always a subgroup of $\MT(A)_{\Q_\ell}$, and it is conjectured that the inclusion is actually an equality (see \cite{SutherlandNotes}).
\begin{conjecture}[Mumford-Tate conjecture]
The identity component of $G_{A,\ell}$ is equal to $\MT(A)_{\Q_\ell}$.
\end{conjecture}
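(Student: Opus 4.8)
The Mumford--Tate conjecture is one of the principal open problems in the arithmetic of abelian varieties, so the honest plan is to lay out the strategy that succeeds in every case where the conjecture is known and to be explicit about where it stalls in general. Write $G^{0}_{A,\ell}$ for the identity component of $G_{A,\ell}$. By Deligne's theorem quoted above one already has $G^{0}_{A,\ell}\subseteq\MT(A)_{\Q_\ell}$; moreover $\MT(A)$ is connected and reductive, so the whole question plays out inside one fixed connected reductive $\Q_\ell$-group, and what must be shown is that the reductive subgroup $G^{0}_{A,\ell}$ is not proper. My plan is to accumulate enough invariants shared by the two groups to force them to coincide.

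First I would pin down the coarse invariants. By work of Bogomolov, the image $\rho_{A,\ell}(\Gal(\overline{F}/F))$ is open in $G_{A,\ell}(\Q_\ell)$ (in particular it contains the homotheties), so the problem is Lie-theoretic. By Faltings' proof of the Tate conjecture for abelian varieties, $V_\ell$ is a semisimple $G_{A,\ell}$-module and $\End(V_\ell)^{G_{A,\ell}}=\End(A_{\overline{F}})\otimes\Q_\ell$, which matches $\End(V(A))^{\MT(A)}\otimes\Q_\ell$; hence $G^{0}_{A,\ell}$ and $\MT(A)_{\Q_\ell}$ have the same commutant in $\End(V)$ and act through faithful, self-dual symplectic representations with the same isotypic decomposition. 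Next, using Serre's theory of Frobenius tori --- the Frobenius at a good prime generates a torus in $G_{A,\ell}$ whose rank equals $\operatorname{rank}\MT(A)$ for a set of primes of density one --- one obtains $\operatorname{rank}G^{0}_{A,\ell}=\operatorname{rank}\MT(A)_{\Q_\ell}$. At this stage the two groups agree as far as rank, commutant, and formal character are concerned.

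Next I would break $A$ up. Using the isogeny decomposition of $A$ into simple factors (the analogue of the decomposition of $\Jac(C_m)$ carried out in Section \ref{sec:JacobianSplitting}) together with Albert's classification of the endomorphism algebras of the factors, I would treat each simple factor. For $A$ of CM type the conjecture is a theorem: $\MT(A)$ and $G^{0}_{A,\ell}$ are tori, and both are computed by the reflex norm (Shimura--Taniyama; Pohlmann). For $A$ whose Mumford--Tate group is of classical type, with a suitable parity or dimension hypothesis, arguments in the spirit of Serre, Pink, and Larsen--Pink identify $G^{0}_{A,\ell}$ inside the relevant symplectic group from root-data bookkeeping once the rank and commutant are known. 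One caveat must be handled separately, and is in general still conjectural: exactly as $\Hg(A\times B)$ can be strictly smaller than $\Hg(A)\times\Hg(B)$, the Mumford--Tate and $\ell$-adic monodromy groups of a product need not be the products of those of the factors, so the general case does not formally reduce to the simple case.

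The hard part --- the reason this remains a conjecture --- is the last mile: upgrading ``same rank, same commutant, same formal character, semisimple faithful symplectic representation'' to an actual equality of algebraic subgroups of $\GSp_{V}$. In exceptional configurations a proper connected reductive subgroup of $\MT(A)_{\Q_\ell}$ can share all of these invariants with the whole group, and there is no general device for excluding it; on the Hodge side this is precisely the exceptional-cycle phenomenon studied in this paper, and on the $\ell$-adic side ruling it out would require producing Galois-invariant tensors --- morally, algebraic cycles --- that one does not know how to construct in general. So my proposal reduces to: (1) assemble the rank, commutant, and formal-character invariants above; (2) settle the CM and abelian-type cases by the known torus and root-data arguments; (3) in the remaining cases, try to show $G^{0}_{A,\ell}$ contains a maximal torus of $\MT(A)_{\Q_\ell}$ and then invoke a Larsen--Pink-style classification of subgroups admitting no smaller faithful representation --- with the candid admission that step (3), together with the reduction from products to simple factors, is where a complete unconditional proof is not presently available.
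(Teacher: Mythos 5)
The statement you were asked to address is not proved in the paper at all: it is the Mumford--Tate Conjecture, stated as Conjecture in Section \ref{sec:backgroundMTConjecture} precisely because it is an open problem. The paper never attempts a proof; it only records Deligne's one-sided inclusion and then cites the cases where the conjecture is known (CM abelian varieties, all abelian varieties of dimension $g\leq 3$ by Banaszak--Kedlaya, and Commelin's theorem that the conjecture is stable under products), which suffice for the CM Jacobians $J_m$ studied later. Your write-up is therefore correctly calibrated: you do not claim a proof, you reproduce the known half (Deligne), you assemble the standard partial invariants (Bogomolov openness, Faltings' semisimplicity and Tate conjecture giving equality of commutants, Serre's Frobenius tori giving equality of ranks), you settle the CM case by reflex-norm/torus arguments, and you state candidly that upgrading ``same rank, commutant, and formal character'' to equality of subgroups of $\GSp_V$ --- and reducing products to simple factors --- is exactly where no unconditional argument exists. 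That admission is accurate and mirrors the reason the paper treats the statement as a hypothesis rather than a theorem.

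Two small points of alignment with the paper are worth noting. First, for the varieties actually used in this article the conjecture is a theorem via the route your step (2) describes: each $J_m$ is of CM type, so the CM case (Pohlmann, Yu) together with Commelin's product result covers every invocation in Sections \ref{sec:DegeneracyST} and \ref{sec:DegenerateExamples}; your ``hard part'' is never needed here. Second, be slightly careful with which group carries the statement: the paper's conjecture is phrased for $G_{A,\ell}$ (equivalently its identity component) against $\MT(A)_{\Q_\ell}$, while the displayed inclusion it attributes to Deligne is stated for $G^1_{A,\ell}$ inside $\Hg(A)_{\Q_\ell}$; your sketch silently passes between the two, which is harmless (they differ by the homotheties, which Bogomolov supplies) but deserves a sentence if this were written out. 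In sum: there is no gap to repair beyond the one you explicitly flag, because the statement is genuinely open, and your assessment agrees with how the paper itself uses it.
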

Since the identity component of $G^{1}_{A,\ell}$ is a subgroup of $\Hg(A)_{\Q_\ell}$, the Mumford-Tate conjecture predicts that this inclusion is also an equality. 

The Mumford-Tate conjecture is known to be true for many examples of abelian varieties, including for abelian varieties of CM type (see, for example, \cite{Pohlmann1968, Yu2015}) and for all abelian varieties of dimension $g\leq 3$ \cite[Theorem 6.11]{Banaszak2015}. More generally, Commelin proved that if the Mumford-Tate conjecture is true for abelian varieties $A$ and $B$ defined over a finitely generated field then it is also true for the product $A\times B$ \cite{Commelin2019}.

When the Mumford-Tate conjecture holds, it follows from results of \cite[Section 3]{Banaszak2015} that, up to conjugation in $\USp(2g)$, the Mumford-Tate group and the Hodge group uniquely determine the identity component of the Sato-Tate group. Unfortunately we will not be able to gain information about the component group of the Sato-Tate group from these two groups. To address this, Banaszak and Kedlaya gave the following refinement of the Mumford-Tate conjecture in \cite{Banaszak2015}.

\begin{conjecture}[algebraic Sato-Tate conjecture]\label{conjec:AST}  There is an algebraic subgroup $\AST(A)$ of $\GSp_{2g}$ over $\mathbb Q$, called the {algebraic Sato-Tate group of $A$}, such that the connected component of the identity $\AST^0(A)$ is reductive and, for each prime $\ell$, $G^{1}_{A,\ell}=\AST(A)\otimes_{\mathbb Q} \mathbb Q_{\ell}$.
\end{conjecture}

When this conjecture holds, the {Sato-Tate group} of $A$ is a maximal compact Lie subgroup of $\AST(A)\otimes_\mathbb{Q} \mathbb{C}$ contained in $\USp(2g)$. 

There are many cases where the algebraic Sato-Tate conjecture is known to be true.  Banaszak and Kedlaya prove that the conjecture holds for all abelian varieties of dimension at most 3 \cite[Theorem 6.11]{Banaszak2015}, for many examples of simple abelian varieties  \cite[Theorem 6.9]{Banaszak2015}, and for all abelian varieties of CM type  \cite[Theorem 6.6]{Banaszak2015}.  There are examples of infinite families of higher dimensional Jacobian varieties for which the algebraic Sato-Tate conjecture is known to be true in \cite{EmoryGoodson2020, FS2016, GoodsonCatalan}. Furthermore, Cantoral-Farf\'an and Commelin \cite{CantoralCommelin2022} proved that the algebraic Sato-Tate conjecture holds whenever the Mumford-Tate conjecture holds for an abelian variety.

\subsection{Moment Statistics}
In Section \ref{sec:DegenerateExamples} we will compute moment sequences of the Sato-Tate groups of  Jacobian varieties. These moment statistics can be used to support the equidistribution statement of the generalized Sato-Tate conjecture by comparing them to moment statistics obtained for the traces $a_i$ in the normalized $L$-polynomial. The numerical moment statistics are an approximation since one can only ever compute them up to some prime.

The following background information has been adapted from \cite{EmoryGoodson2020, GoodsonCatalan}. We define the $n^{th}$ moment (centered at 0) of a probability density function to be the expected value of the $n$th power of the values, i.e. $M_n[X]=E[X^n]$. For independent variables $X$ and $Y$ we have $E[X+Y]=E[X]+E[Y]$ and $E[XY]=E[X]E[Y]$, which yield the following identity
\begin{align*}
     M_n[X_1+\cdots + X_m]=\sum_{\beta_1+\cdots+\beta_m=n} \binom{n}{\beta_1,\ldots, \beta_m}M_{\beta_1}[X_1]\cdots M_{\beta_m}[X_m],
\end{align*}
where the $X_i$ are independent.

In Section \ref{sec:DegenerateExamples}, we will work with the unitary group $\U(1)$ and consider the trace map  on $U\in \U(1)$ defined by $z:=\tr(U)=u+\overline{u}=2\cos(\theta)$, where $u=e^{i\theta}$.  From here we see that $dz=2\sin(\theta)d\theta$ and 
$$\mu_{\U(1)}= \frac1\pi \frac{dz}{\sqrt{4-z^2}}$$
is the pushforward of the Haar measure on  $\U(1)$ to the interval $[-2,2]$
(see \cite[Section 2]{SutherlandNotes}). We can deduce the following pushforward measure
\begin{equation*}\label{eqn:muU1}
    \mu_{\U(1)^k}= \prod_{i=1}^k \frac1\pi \frac{dz_i}{\sqrt{4-z_i^2}}.
\end{equation*} 

We can now define the moment sequence $M[\mu]$, where $\mu$ is a positive measure on some interval $I=[-d,d]$. The $n^{th}$ moment $M_n[\mu]$  is, by definition,   $\mu(\phi_n)$, where $\phi_n$ is the function $z\mapsto z^n$. This yields $M_n[\mu_{\U(1)}]  = \binom{n}{n/2}$, where $\binom{n}{n/2}:=0$ if $n$ is odd. Hence, $M[\mu_{\U(1)}] = (1,0,2, 0,6,0,20,0,\ldots).$ From here, we  take binomial convolutions to obtain
\begin{align*}
    M_n[\mu_{\U(1)^k}] &=\sum_{\beta_1+\cdots+\beta_k=n} \binom{n}{\beta_1,\ldots, \beta_k}M_{\beta_1}[\mu_{\U(1)}]\cdots M_{\beta_k}[\mu_{\U(1)}].
\end{align*}

For a dimension $g$ abelian variety, denote by $\mu_i$ the projection of the Haar measure onto the interval $\left[-\binom{2g}{i},\binom{2g}{i}\right]$, where $i\in\{1,2, \ldots, g\}$. We  compute $M_n[\mu_i]$ by averaging over the components of the Sato-Tate group.


\section{Degenerate Abelian Varieties}\label{sec:Degeneracy}

We  define an abelian variety $A$ to be \emph{degenerate} if its Hodge ring is not generated by divisor classes, i.e., the containment  $\mathscr D^*(A) \subset \mathscr B^*(A)$ is strict. Results of Hazama \cite{Hazama89} show that $A$ is stably nondegenerate if and only if the rank of the Hodge group is maximal ($A$ is stably nondegenerate if $\mathscr D^*(A^n) = \mathscr B^*(A^n)$ for all $n>0$), which implies a similar statement for the Mumford-Tate group. Hazama also proves an interesting result regarding products of abelian varieties: if $A$ and $B$ are both stably nondegenerate, then the only scenario in which $A\times B$ could be degenerate is if one of its simple factors is of type IV in the Albert's classification (see  \cite{Hazama89}). Recall that an abelian variety $X$ is of type IV if the center of $\End(X)\otimes_\Z \Q$ is a CM field $K$ with totally real subfield $K_0$ containing elements of $K$ that are fixed by the Rosati involution induced by the polarization of $X$ (see, for example, \cite[Section 21]{Mumford69}).

The main goal of this section is to investigate degeneracy via Mumford-Tate groups, Hodge rings, and Hodge groups. The results of Sections \ref{sec:DegneracyMT} and \ref{sec:DegeneracyHodgeGroup} are general, while the result in Section \ref{sec:DegeneracyHodgeRing} holds for Jacobians of the curves $y^2=x^m-1$. In Section \ref{sec:DegeneracyST} we will show how degeneracy can affect the Sato-Tate group of an abelian variety.

\subsection{Degeneracy via the Mumford-Tate group}\label{sec:DegneracyMT}

In this section we study degeneracy of nonsimple abelian varieties via their Mumford-Tate groups. Consider a product  $A_1\times A_2\times \cdots \times  A_n$ of nonisogenous (absolutely) simple abelian varieties. The question we wish to answer is: when is the Mumford-Tate group of this product smaller than expected? In particular, we wish to know when one of the canonical projections
$$\MT(A_1\times A_2\times \cdots \times  A_n)\longrightarrow \MT(A_i)$$
is an isomorphism or an isogeny. To answer this, we generalize results of Lombardo \cite{Lombardo2021}. In what follows, we focus on abelian varieties $A_i$ with CM.

\subsubsection{Notation}\label{sec:MTnotation}

Let $E$ be a CM field and let $L/\Q$ be the Galois closure of $E/\Q.$ Let $T_E=Res_{E/\Q}(\mathbb G_m)$ be the corresponding algebraic torus over $\Q$. A specific case of this is $T_E(\Q)=E^\times$. See Section 2.3.1 of \cite{Lombardo2021} for more details. 

Let $G=\Gal(L/\Q)$ and let $H$ be the subgroup of $G$ associated to $E$: $H=\Gal(L/E)$. We can identify a CM type $\Phi$ of $E$ with a subset of $H\setminus G$, and we define the set $\widetilde{\Phi}=\{g\in G\mid Hg\in \Phi\}$. In our work, the reflex field of $E$ will always satisfy $E^*=E$ though its CM-type $\Phi^*$ will usually be distinct from $\Phi$.

There are several important maps that we will carefully define. The first of these is the usual norm function $N\colon L\to E$ defined by $N(x) = \prod_{\sigma} \sigma(x)$, where the product is taken over all $\sigma$ in $\Gal(L/E)$. In the special case where $L=E$, this norm map is simply the identity.  This norm map induces a map on the associated tori.

We define two dual maps on the character groups of the tori $T_E$, denoted by $\widehat{T}_E$. The first is a map on characters associated to $E$:
\begin{align*}
    \phi^*\colon \widehat{T}_E &\to \widehat{T}_{E^*}\\
            [\beta]&\mapsto \sum_{\psi\in\Phi^*} [\psi\beta].\label{eqn:phi}
\end{align*}
Here $ [\beta]$ denotes the character of $T_E$ corresponding to the embedding $\beta\colon E\to \overline\Q$. 
The second map sends characters associated to $E^*$ to those associated to $L$:
\begin{align*}
    N^*\colon \widehat{T}_{E^*} &\to \widehat{T}_{L}\\
            [Hg]&\mapsto \sum_{h\in H} [hg].
\end{align*}
Here we are defining the map by considering the isomorphism $\widehat{T}_{E^*}\simeq \mathbb Z[H\setminus G]$ (see the exposition in \cite{Lombardo2021}). 
In our work, we will be interested in the composition of these maps: $N^* \phi^*\colon \widehat{T}_{E} \to \widehat{T}_{L}$.

\subsubsection{The canonical projection map}\label{sec:canonicalprojmap}

The work in this section closely follows Section 4 of \cite{Lombardo2021} where the results are stated for a product of two absolutely simple abelian varieties. Let $E_i$ denote the CM field of the absolutely simple abelian variety $A_i$ for $1\leq i\leq n$. We begin with a result for character maps.

\begin{lemma}\label{lemma:MTmaps1}
For $i=1,\ldots,n$, let $E_i$ be a CM field, $L_i$ be its Galois closure over $\Q$, and $L$ be a finite Galois extension of $\Q$ that contains each $L_i$.  The image of the map $$T_L \xrightarrow{(N_1, N_2, \ldots, N_n)} T_{E_1^*} \times T_{E_2^*}\times \cdots \times T_{E_n^*} \xrightarrow{(\phi_1,\phi_2, \ldots,\phi_n)} T_{E_1} \times T_{E_2} \times \cdots \times T_{E_n}$$ is $\MT(A_1 \times A_2\times \cdots \times A_n)$.
\end{lemma}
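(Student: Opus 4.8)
The plan is to reduce the $n$-factor statement to the two-factor case treated in Section 4 of \cite{Lombardo2021} by an induction, combined with a careful unwinding of the character-group description of the Mumford-Tate group of a CM abelian variety. Recall that for a single absolutely simple CM abelian variety $A_i$ with CM field $E_i$ and CM type $\Phi_i$, the Mumford-Tate group $\MT(A_i)$ is the subtorus of $T_{E_i}$ whose character lattice is the quotient of $\widehat{T}_{E_i}$ by the kernel of the map $\phi_i^*$ (equivalently, $\MT(A_i)$ is the image of $T_{E_i^*}\xrightarrow{\phi_i} T_{E_i}$, which is the standard reflex-norm description of the Mumford-Tate torus; see \cite[Section 2.3]{Lombardo2021}). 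The content of Lemma \ref{lemma:MTmaps1} is that for a product, the Mumford-Tate group is not merely contained in $\prod_i \MT(A_i)$ but is exactly cut out by insisting that the $E_i^*$-level coordinates all descend from a single element of $T_L$ via the various norm maps $N_i$.

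First I would set $L$ to be a Galois number field over $\Q$ containing all the $E_i$ (and hence all reflex fields $E_i^*=E_i$), so that all the tori $T_{E_i}$, $T_{E_i^*}$, $T_L$ are defined and the maps $N_i\colon T_L\to T_{E_i^*}$ and $\phi_i\colon T_{E_i^*}\to T_{E_i}$ of Section \ref{sec:MTnotation} make sense simultaneously. The key structural input is that $\MT(A_1\times\cdots\times A_n)$ is, by definition, the smallest $\Q$-algebraic subgroup of $\prod_i \GL_{V(A_i)}$ through which the cocharacter $\mu_\infty = (\mu_{\infty,A_1},\ldots,\mu_{\infty,A_n})$ factors; since each $A_i$ is CM this is a subtorus $T$ of $\prod_i T_{E_i}$. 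Dually, $T$ corresponds to the image of the map on character groups $\bigoplus_i \widehat{T}_{E_i}\to \widehat{T}_T$, and the cocharacter $\mu_\infty$ records exactly how the CM types $\Phi_i$ pair against a chosen embedding. I would then identify $\widehat{T}$ as the image inside $\widehat{T}_L$ of the composite $\sum_i N^*\phi_i^*\colon \bigoplus_i \widehat{T}_{E_i}\to \widehat{T}_L$: each $N^*\phi_i^*[\beta]$ is the $L$-character $\sum_{h\in H_i}\sum_{\psi\in\Phi_i^*}[h\psi\beta]$, and these are precisely the characters appearing in the reflex description of $\mu_{\infty,A_i}$. Taking duals turns "image of $\sum_i N^*\phi_i^*$ on characters" into "image of $(\phi_1,\ldots,\phi_n)\circ(N_1,\ldots,N_n)$ on tori", which is the assertion of the lemma. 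The two-factor case $n=2$ is exactly \cite[Section 4]{Lombardo2021}, and the general case follows either by the same argument verbatim or by an easy induction: $\MT(A_1\times\cdots\times A_n)$ surjects onto $\MT(A_1\times\cdots\times A_{n-1})$ and onto $\MT(A_n)$, and one checks the fiber product structure at the $E_i^*$-level is governed by a single $T_L$.

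The main obstacle I expect is bookkeeping rather than conceptual: one must be careful that a single auxiliary field $L$ simultaneously trivializes all the Galois actions, that the reflex fields satisfy $E_i^*=E_i$ as asserted in Section \ref{sec:MTnotation} (so that the maps $\phi_i^*$ land where claimed), and that the identification $\widehat{T}_{E_i^*}\simeq \Z[H_i\backslash G]$ is compatible across the different subgroups $H_i=\Gal(L/E_i)$ when one pushes everything into $\widehat{T}_L\simeq \Z[G]$. A secondary subtlety is verifying that the image of the stated composite is actually \emph{reduced} (a subtorus, not just a constructible set) — but this is automatic since the image of a homomorphism of tori is a subtorus — and that it contains the image of $\mu_\infty$ while being no larger, which is where one genuinely uses the minimality in the definition of $\MT$. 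Once the $n=2$ case is granted from \cite{Lombardo2021}, none of these steps is hard; the work is in setting up notation so that the duality between the character-group picture and the torus picture is transparent.
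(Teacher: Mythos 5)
Your main line of argument—observing that the reflex-norm/character-group description of the Mumford-Tate group of a CM product is insensitive to the number of factors, so Lombardo's Lemma 4.1 applies verbatim—is exactly what the paper does (its proof is a one-line citation of \cite{Lombardo2021} with the remark that the argument does not depend on the number of factors). The alternative induction via a fiber-product structure that you sketch is vaguer and not needed, but your primary route is correct and essentially identical to the paper's.
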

\begin{proof}
See the proof of Lemma 4.1 of \cite{Lombardo2021}, which does not depend on the number of factors in the product. 
\end{proof}

Now consider the projection  $\pi_i\colon \MT(A_1\times A_2\times \cdots \times A_n)\to \MT(A_i)$. 

\begin{lemma}\label{lemma:MTmaps2}
Let $\widehat{T}_i=\{0\}\times \cdots  \times \{0\} \times\widehat{T}_{E_i} \times \{0\}\times \cdots  \times \{0\}.$ The canonical projection $\pi_i$ is an isomorphism if and only if 
\[
\widehat{T}_i + \ker (N_1^* \phi_1^* + \cdots + N_n^*\phi_n^*) = \widehat{T}_{E_1} \times  \cdots \times \widehat{T}_{E_n}.
\]
Similarly, $\pi_i$ is an isogeny if and only if $\widehat{T}_i + \ker (N_1^* \phi_1^* + \cdots + N_n^*\phi_n^*)$ has finite index in the product $ \widehat{T}_{E_1} \times  \cdots \times \widehat{T}_{E_n}$.
\end{lemma}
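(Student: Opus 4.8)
The plan is to transport the statement about the projection $\pi_i$ on Mumford-Tate groups to a statement about the dual map on character lattices, and then read off the isomorphism/isogeny criterion from linear algebra over $\Z$. By Lemma \ref{lemma:MTmaps1}, $\MT(A_1\times\cdots\times A_n)$ is the image of $\Psi:=(\phi_1,\ldots,\phi_n)\circ(N_1,\ldots,N_n)\colon T_L\to \prod_j T_{E_j}$, so its character group $\widehat{\MT(A_1\times\cdots\times A_n)}$ is the quotient of $\prod_j\widehat{T}_{E_j}$ by $\ker\widehat{\Psi}$, where $\widehat{\Psi}=\sum_j N_j^*\phi_j^*$ is the dual map to $\widehat T_L$. (Here I use that taking character groups is an exact contravariant functor on tori, so the image of a map of tori has character group equal to the image of the dual map on character groups, equivalently the source mod the kernel of the dual.)

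First I would identify the projection $\pi_i$ dually: the inclusion $\MT(A_i)\hookrightarrow\MT(A_1)\times\cdots$ — wait, rather the projection $\pi_i$ dualizes to the inclusion $\widehat{\MT(A_i)}\hookrightarrow \widehat{\MT(A_1\times\cdots\times A_n)}$ induced by the coordinate inclusion $\widehat T_i = \{0\}\times\cdots\times\widehat T_{E_i}\times\cdots\times\{0\}\hookrightarrow\prod_j\widehat T_{E_j}$ followed by the quotient by $\ker\widehat\Psi$. Concretely, $\widehat{\MT(A_i)}$ is the image of $\widehat T_i$ in $(\prod_j\widehat T_{E_j})/\ker\widehat\Psi$, i.e. $(\widehat T_i + \ker\widehat\Psi)/\ker\widehat\Psi$, and $\widehat\pi_i$ is the inclusion of this subgroup into the whole quotient $(\prod_j\widehat T_{E_j})/\ker\widehat\Psi$.

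Then the equivalences are immediate. A morphism of tori over $\Q$ is an isomorphism if and only if the dual map of (finitely generated free) character groups is an isomorphism, and it is an isogeny if and only if the dual map is injective with finite cokernel (equivalently, is injective after $\otimes\Q$, equivalently the image has finite index). Since $\widehat\pi_i$ is already an inclusion of a subgroup into the quotient $(\prod_j\widehat T_{E_j})/\ker\widehat\Psi$, it is surjective — hence an isomorphism — exactly when $\widehat T_i + \ker\widehat\Psi = \prod_j\widehat T_{E_j}$, and it has finite cokernel — hence $\pi_i$ is an isogeny — exactly when $\widehat T_i + \ker\widehat\Psi$ has finite index in $\prod_j\widehat T_{E_j}$. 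This is precisely the claimed criterion with $\widehat\Psi = N_1^*\phi_1^* + \cdots + N_n^*\phi_n^*$.

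The only genuine subtlety — the main thing to get right rather than the main obstacle — is the bookkeeping that $\widehat\pi_i$, as a map out of $\widehat{\MT(A_i)}$, really is the corestriction of the coordinate inclusion $\widehat T_i\hookrightarrow\prod_j\widehat T_{E_j}$ modulo $\ker\widehat\Psi$; this rests on the compatibility, for each $i$, between the description of $\MT(A_i)$ as $\operatorname{im}(\phi_i\circ N_i)$ (the $n=1$ case of Lemma \ref{lemma:MTmaps1}) and the description of the product, which is exactly why the proof of Lemma 4.1 of \cite{Lombardo2021} was noted to be independent of the number of factors. Once that diagram of tori and its dual are set up, everything else is the standard dictionary between isogenies of tori and finite-index inclusions of character lattices, so I would keep that part brief and refer to Section 4 of \cite{Lombardo2021} for the case $n=2$, of which this is the verbatim generalization.
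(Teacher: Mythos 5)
Your proposal is correct and follows essentially the same route as the paper: dualize to character lattices, use Lemma \ref{lemma:MTmaps1} to identify $\widehat{\MT}(A_1\times\cdots\times A_n)$ with $(\widehat{T}_{E_1}\times\cdots\times\widehat{T}_{E_n})/\ker(N_1^*\phi_1^*+\cdots+N_n^*\phi_n^*)$ via injectivity into $\widehat{T}_L$, and read off the isomorphism/isogeny criteria as surjectivity or finite cokernel of the image of $\widehat{T}_i$ in that quotient. The compatibility you flag (that $\pi_i^*$ is the corestriction of the coordinate inclusion modulo the kernel) is exactly what the paper's commutative diagram and its dual encode, so there is no gap.
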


\begin{proof}
This result is an easy generalization of Lemma 4.2 of \cite{Lombardo2021}, and so we include just enough detail in the proof to aid in understanding the result.

Using the notation and definitions in Section \ref{sec:MTnotation} and the results of Lemma \ref{lemma:MTmaps1}, we can build the following commutative diagram
\[\xymatrixcolsep{6pc}
\xymatrix{
T_L \ar@{->>}[r]^{(\phi_1 N_1,\,\ldots,\, \phi_n N_n)\;\;\;\;} \ar[d]^{(N_1, \ldots, N_n)}  & \MT(A_1\times\cdots \times A_n) \ar@{->>}[r]^{\pi_i}  \ar@{^{(}->}[d] & \MT(A_i) \ar@{^{(}->}[d] \\
T_{E_1^*} \times \cdots  \times T_{E_n^*}  \ar[r]_{(\phi_1, \, \ldots,\, \phi_n)} & T_{E_1} \times \cdots \times T_{E_n} \ar@{->>}[r]_{p_i} & T_{E_i}
}
\]
and its dual
\[\xymatrixcolsep{5.5pc}
\xymatrix{
\widehat{T}_L & \widehat \MT(A_1\times\cdots \times A_n) \ar@{_{(}->}[l]_{N_1^* \phi_1^* + \cdots+ N_n^* \phi_n^*\;\;\;\;}    & \widehat \MT(A_i) \ar@{_{(}->}[l]_{\pi_i^*}  \\
\widehat T_{E_1^*} \times \cdots\times \widehat T_{E_n^*} \ar[u]^{N_1^* +\cdots+ N_n^*}   & \widehat T_{E_1} \times \cdots \times \widehat T_{E_n} \ar@{->>}[u] \ar[l]^{(\phi_1^*,\,\ldots,\, \phi_n^*)} & \widehat T_{E_i} \ar@{_{(}->}[l]^{p_i^*} \ar@{->>}[u]
 }
\]
where $\pi_i, p_i$ are the canonical projections on the $i$-th factor, and we denote by $f^*$ the map induced on characters by a morphism $f$ of algebraic tori.

Proving that the (surjective) projection $\pi_i$ is  injective is equivalent to proving that $\pi_i^*$ is surjective. Similarly, proving that the projection $\pi_i$ is an isogeny is equivalent to proving that $\pi_i^*$ has finite cokernel. By diagram chasing, in both cases we need only consider the kernel and cokernel of $$\widehat T_{E_i}\longrightarrow \widehat T_{E_1}\times \cdots \times \widehat T_{E_n}\longrightarrow \widehat \MT(A_1\times \cdots \times A_n).$$

The kernel of the map $\prod_i\widehat T_{E_i}\longrightarrow \widehat \MT(A_1\times \cdots \times A_n)$ coincides with the kernel of $$\widehat{T}_{E_1} \times \cdots \times \widehat{T}_{E_n} \xrightarrow{N_1^* \phi_1^* + \cdots+ N_n^*\phi_n^*} \widehat{T}_L$$ since $\widehat{\MT}(A_1\times \cdots \times A_n) \to \widehat{T}_L$ is injective. 
Hence, $\widehat{\MT}(A_1\times \cdots \times A_n)$ can be identified with $({\widehat{T}_{E_1} \times \cdots \times \widehat{T}_{E_n}})/{ \ker (N_1^* \phi_1^* + \cdots+ N_n^*\phi_n^*) }.$ 

Under this identification, the map $\pi_i^*$ is surjective if and only if $p_i^*\left( \widehat{T}_{E_i} \right)$, which equals $\widehat{T}_i$,  surjects onto ${\widehat{T}_{E_1} \times\cdots\times \widehat{T}_{E_n}}/{ \ker (N_1^* \phi_1^* +\cdots+ N_n^*\phi_n^*) }$. Similarly,  $\pi_i^*$ is an isogeny if and only if $p_i^*\left( \widehat{T}_{E_i} \right)$ maps to $ {\widehat{T}_{E_1} \times\cdots\times \widehat{T}_{E_n}}/{ \ker (N_1^* \phi_1^* +\cdots+ N_n^*\phi_n^*) }$ with finite cokernel.
\end{proof}

As in \cite[Lemma 4.3]{Lombardo2021}, this can all be rephrased in terms of matrices. In the notation of Section \ref{sec:MTnotation}, let $E_i$ denote the CM field of the absolutely simple abelian variety $A_i$ for $1\leq i\leq n$ and let $L$ be a finite Galois extension of $\mathbb Q$ containing the Galois closure of each $E_i$. Furthermore, let $G$ denote the Galois group of $L$ over $\Q$, and $H_i\leq G$ be the subgroup corresponding to $E_i$. 

We let $M_i$ denote the matrix that represents the map  $N_i^* \phi_i^*$. The rows of $M_i$ are indexed by elements of $G$ whereas the columns of $M_i$ are indexed by the set $H_i\setminus G$. We then compute the composition $N_i^*\phi_i^*$ for each $[\beta]\in \widehat{T}_{E_i}$ in order to determine the entries of $M_i$. In the special case where $E_i=L$, the matrix $M_i$ is a square matrix with rows and columns indexed by $G$ (since $H_i=\{1\}$). Its entry in position $(g_j,H_i g_k)$ is given by
\[(g_j,H_i g_k)=\begin{cases}1&\text{if } g_jg_k^{-1}\in \widetilde{\Phi_i^*}\\
0&\text{otherwise.}
\end{cases}
\]

The matrix $M$ representing the map $N_1^* \phi_1^* +\cdots+ N_n^*\phi_n^*$ is obtained by concatenating the matrices $M_1, \ldots, M_n$ horizontally. This construction allows us to easily examine the kernel of the map when working out explicit examples.

We will apply these results to Jacobians of curves of the form $y^2=x^m-1$. In Section \ref{sec:JacobianSplitting} we give the decompositions of the Jacobian varieties into simple factors, and in Section  \ref{sec:DegenerateExamples} we use these decompositions to write the matrix $M$ and study the degeneracy of the Mumford-Tate groups. We see examples where the canonical projection of the Mumford-Tate group is an isomorphism and ones where it is merely an isogeny.

\subsection{Degeneracy via Hodge Rings}\label{sec:DegeneracyHodgeRing}

Recall that the Hodge ring of $A$ is denoted $\mathscr B^*(A)$ and the subring generated by divisor classes is denoted $\mathscr D^*(A)$. As noted in Section \ref{sec:backgroundHodgeRing}, we have the containment $\mathscr D^*(A) \subseteq \mathscr B^*(A)$. The question we wish to answer is: when is the containment strict? More precisely, we wish to know when $\mathscr B^*(A)$ is not generated by the divisor classes $\mathscr B^1(A)$. We can think of this as the Hodge ring being larger than expected. The additional Hodge cycles that are not generated by divisor classes are referred to as {exceptional cycles}, and so we can also ask: in what codimensions do we have exceptional cycles?

In general, these are difficult questions to answer. However, if we specify to Jacobian varieties $J_m$ of curves $C_m\colon y^2=x^m-1$ then we can give satisfactory answers. The following result largely follows from Lemma 5.5 of \cite{Shioda82} and will be used to prove Theorem \ref{thm:degeneracyxm}.

\begin{lemma}\label{lemma:exceptionalHodge}
Let $m=pq$, with $p\leq q$ odd primes. Suppose $q$ is relatively prime to $((p+1)/2)!$. Then  $\mathcal B^d(J_m)$ has exceptional Hodge cycles when $d=(p+1)/2$. 
\end{lemma}

\begin{proof}
Consider the quotient
$$\mathcal B^d(J_m)\bigg/\sum_{r=1}^{d-1} \mathcal B^r(J_m)\cdot \mathcal B^{d-r}(J_m).$$
If the dimension of this space is greater than or equal to 1, then there will be Hodge cycles in $\mathcal B^d(J_m)$ that do not come from lower codimension. In particular, there will be exceptional cycles that are not generated by divisor classes. 

Shioda relates the dimension of this space to the the number $N_m(d)$ of certain indecomposable elements of a semigroup (see Section 1C of \cite{Shioda82}). For certain values of $m$, Shioda gives a lower bound for this quantity (see Lemma 5.5 of  \cite{Shioda82}): letting $m=(2d-1)m'$, we have
\begin{equation}\label{eqn:Shioda_indecomposable}
    N_m(d)\geq m'-1
\end{equation}
whenever $\gcd(m',d!)=1$.

For $m=pq$ and $d=(p+1)/2$, we let $m'=q$ so that $m=(2d -1)m'.$ If  $\gcd(q,d!)=1$ then the inequality in \eqref{eqn:Shioda_indecomposable} becomes
$$N_m(d)\geq q-1$$
which is greater than 1 for any $q>2$. Thus, there are exceptional cycles in $\mathcal B^d(J_m)$, where $d=(p+1)/2$.
\end{proof}
This result will always hold if we choose $q\geq p$ since then $q>(p+1)/2$ and is, therefore, relatively prime to every smaller positive integer. In Section \ref{sec:proofdegeneracyxm} we use this result to prove Theorem \ref{thm:degeneracyxm}.

Lemma \ref{lemma:exceptionalHodge} does not give a value or even a bound for the number of exceptional cycles -- we simply get a lower bound on the number of cycles that do not come from a lower codimension. However, as we will see in Section \ref{sec:DegenerateExamples}, it is possible to compute the generators of the Hodge ring for the Jacobian varieties $J_m$ and so we will be able to see which cycles are exceptional.

\subsection{Degeneracy via Hodge Groups}\label{sec:DegeneracyHodgeGroup}

As noted in Section \ref{sec:backgroundHodgeMT}, the Hodge and Mumford-Tate groups are closely related to each other. The Hodge group is also closely related to the Lefschetz group, denoted $\LL(A)$, which is the connected component of the identity in the centralizer of the endomorphism ring $\End(A_{\overline F})_\Q$ in $\Sp_{V}$. In general, $\Hg(A)\subseteq \LL(A)$ and while the inclusion can be strict, there are cases where we have equality. In fact, when $\dim(A)\leq 3$ it is known that we always have equality \cite{Banaszak2015}. 

The question we wish to answer is: when is the Hodge group smaller than the Lefschetz group? In this paper we study this problem specifically for the Jacobian varieties $\Jac(y^2=x^m-1)$. We could investigate an answer to this through the Hodge group's relationship to the Mumford-Tate group, but we choose a different approach in this paper: we make use of a connection to Hodge rings.

The following result relates the $\Q$-span of Hodge cycles with the Hodge group. Note that, in the notation of Section \ref{sec:DegeneracyHodgeRing}, $\mathscr B^d(A)=H^{2d}_\text{Hodge}(A)\otimes\C$. 

\begin{theorem}\cite[Theorem 17.3.3]{BirkenhakeLange2004}\label{thm:Hodge}
Let $A$ be an abelian variety of dimension $g$. For any $1\leq d \leq g$, denote by 
$$H^{2d}_\text{Hodge}(A):=H^{2d}(A,\Q)\cap H^{d,d}(A)$$
the $\Q$-vector space of Hodge cycles of codimension $d$ on $A$. Then
$$H^{2d}_\text{Hodge}(A)= H^{2d}(A,\Q)^{\Hg(A)}.$$
\end{theorem}

There is a balance between the size of the Hodge group and the dimension of $H^{2d}_\text{Hodge}(A)$. The existence of exceptional Hodge cycles, which we sometimes learn of through Lemma \ref{lemma:exceptionalHodge}, often coincides with a smaller than expected Hodge group. In fact, we see something even more interesting: the exceptional cycles give us information about the embedding of $\Hg(A)$ into $\USp(2g)$. We will see examples of this correspondence between Hodge rings and Hodge groups in Section \ref{sec:DegenerateExamples}.

\subsection{Effects on the Sato-Tate group}\label{sec:DegeneracyST}
As noted in Section \ref{sec:backgroundMTConjecture}, when the Mumford-Tate conjecture holds for an abelian variety $A$ then the Mumford-Tate group and Hodge group can be used to gain information about the Sato-Tate group of $A$. In particular, the Hodge group will equal the identity component of the algebraic Sato-Tate group. The results and exposition in Sections \ref{sec:DegneracyMT}, \ref{sec:DegeneracyHodgeRing}, and \ref{sec:DegeneracyHodgeGroup} help us understand the role degeneracy can play in the Sato-Tate group.

The degeneracy we see in the Mumford-Tate group in Section \ref{sec:DegneracyMT} tells us when the Mumford-Tate group of $A$ coincides with the Mumford-Tate group of certain factors. When the Mumford-Tate conjecture holds for $A$, this will imply that $\ST^0(A)\simeq \ST^0(A_i)$. As with the Mumford-Tate group, in this situation we have a smaller than expected identity component for the Sato-Tate group.

The degeneracy of the Mumford-Tate group gives us some information about the Hodge group and the identity component of the algebraic Sato-Tate group of $A$. However in order to compute moment statistics for the Sato-Tate group, we need to know the explicit embedding of the Hodge group in $\USp(2g)$, where $g$ is the dimension of $A$. The exceptional cycles in the Hodge ring of $A$ give extra relations on the elements of the Hodge ring (see Theorem \ref{thm:Hodge}); these extra relations can be used to determine the embedding of the Hodge group.

Since the Mumford-Tate and Hodge groups only give us information about the identity component of the Sato-Tate group, we will not gain information about the component group. The Sato-Tate group of a degenerate abelian variety  can have not only an identity component that is  smaller than expected, but can also have a larger component group than what would be explained by endomorphisms. In general, for an abelian variety $A/F$ we have a canonical surjection
$$\ST(A)/\ST^0(A) \rightarrow \Gal(K/F),$$
where $K$ is the endomorphism field of $A$, but the surjection is not necessarily an isomorphism if $A$ is degenerate (see, for example, \cite{SutherlandGenus3}).  In general, we have an isomorphism $\ST(A)/\ST^0(A) \simeq \Gal(L/F)$, where $L$ is the minimal Galois extension of $F$ for which $\ST(A_L)$ is connected.\footnote{The field $L$ is exactly the field $K$ when $A$ is nondegenerate. In particular, this is the case for all abelian varieties for dimension $\leq 3$.} The field $L$ is the fixed field of the kernel of a map induced by the $\ell$-adic representation $\rho_{A,\ell}$  in Equation \eqref{eqn:artinrepresntation} (see \cite[Theorem 3.12]{SutherlandNotes}). 
In some cases, we can use results of Zywina \cite{zywina2020determining} to determine this field $L$. In Section \ref{sec:DegenerateExamples} we will see examples where the field $L$ is the endomorphism field of $A$ and ones where an extension is needed.


\section{Decompositions of Jacobians}\label{sec:JacobianSplitting}

Let $C_m/\Q$ denote the smooth projective curve with hyperelliptic equation $y^2=x^m-1$, and let $J_m$ denote its Jacobian. In this section we describe how $J_m$ decomposes into simple factors. We begin with two cases: $m=p^2$ and $m=pq$, where $p$ and $q$ are distinct odd primes.

In the first case, let $\pi_p$ denote the map $\pi_p: (x,y)\mapsto (x^{p},y)$ from $C_m$ to the degree $p$ hyperelliptic curve $C_p\colon y^2=x^p-1$. We let $X$ denote the identity component of the kernel of $(\pi_p)_*\colon J_m \to J_p$. This gives the following isogeny over $\Q$
\begin{align}\label{eqn:Jpp_splitting}
    J_{p^2}\sim X\times J_p.
\end{align}
We can easily compute the dimension of $X$ to be $\dim(J_m)-\dim(J_p)=p(p-1)/2=\phi(m)/2$, where $\phi$ is Euler's totient function.

In the second case, there are two such maps:
 $\pi_p\colon (x,y)\mapsto (x^{q},y)$ from $C_m$ to $C_p:y^2=x^p-1$ and $\pi_q\colon (x,y)\mapsto (x^{p},y)$ from $C_m$ to $C_q:y^2=x^q-1$. We obtain the following isogeny over $\Q$
\begin{align}\label{eqn:Jpq_splitting}
    J_{pq}\sim X\times J_p \times J_q,
\end{align}
where the dimension of $X$ is computed to be $\phi(m)/2$.

\begin{proposition}\label{prop:X_simple_CMtype}
Let $m=p^2$, respectively $pq$. Then the Prym variety $X$ given by the isogeny in Equation \eqref{eqn:Jpp_splitting}, respectively Equation \eqref{eqn:Jpq_splitting}, is absolutely simple. 
\end{proposition}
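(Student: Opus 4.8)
The plan is to identify $X$ with an abelian variety of CM type and then apply the classical criterion that a CM abelian variety is absolutely simple precisely when its CM type is primitive.

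First I would pin down the CM structure. The automorphism $[\zeta_m]\colon (x,y)\mapsto(\zeta_m x,y)$ of $C_m$ has order $m$ and induces an embedding $\mathbb Z[\zeta_m]\hookrightarrow\End(J_m)$, hence $\mathbb Q[t]/(t^m-1)=\prod_{d\mid m}\mathbb Q(\zeta_d)\hookrightarrow\End^0(J_m)$. Under the isogeny \eqref{eqn:Jpp_splitting} (resp.\ \eqref{eqn:Jpq_splitting}), the factor $X$ is the part cut out by the $\mathbb Q(\zeta_m)$-component of this product, so $\mathbb Q(\zeta_m)\hookrightarrow\End^0(X)$; since $\dim X=\phi(m)/2=\tfrac12[\mathbb Q(\zeta_m):\mathbb Q]$, this exhibits $X$ as an abelian variety of CM type with CM field $\mathbb Q(\zeta_m)$. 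To read off the CM type I would use the basis $\omega_j=x^{j-1}\,dx/y$ $(1\le j\le\tfrac{m-1}{2})$ of holomorphic $1$-forms on $C_m$, on which $[\zeta_m]^*$ acts by $\omega_j\mapsto\zeta_m^{\,j}\omega_j$; the pullbacks along the map $\pi_p$ (resp.\ the maps $\pi_p,\pi_q$) span exactly the $\omega_j$ with $\gcd(j,m)>1$, so $H^{1,0}(X)$ is spanned by the $\omega_j$ with $\gcd(j,m)=1$. Identifying $\Hom(\mathbb Q(\zeta_m),\C)$ with $(\mathbb Z/m\mathbb Z)^\times$ via $\zeta_m\mapsto\zeta_m^{\,a}$, this identifies the CM type of $X$, up to the usual sign conventions, with
\[
\Phi_m=\{\,a\in(\mathbb Z/m\mathbb Z)^\times : 1\le a\le\tfrac{m-1}{2}\,\}.
\]

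Next I would invoke the Shimura--Taniyama theory: an abelian variety of CM type $(\mathbb Q(\zeta_m),\Phi_m)$ is absolutely simple if and only if $\Phi_m$ is primitive, i.e.\ not induced from the CM type of a proper CM subfield. Since $\mathbb Q(\zeta_m)/\mathbb Q$ is Galois with group $G=(\mathbb Z/m\mathbb Z)^\times$, this is equivalent to triviality of the stabilizer $S:=\{g\in G : g\Phi_m=\Phi_m\}$; note that $-1\notin S$ because $(-1)\Phi_m=\overline{\Phi_m}\neq\Phi_m$, so once $S=\{1\}$ there is genuinely nothing left to induce from.

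The last step, which I expect to be the main obstacle, is to prove $S=\{1\}$. Here is the idea. Suppose $g\in S$ is represented by an integer in $\{1,\dots,m-1\}$. Since $1\in\Phi_m$ and $g\cdot 1\equiv g$, the residue of $g$ lies in $\{1,\dots,\tfrac{m-1}{2}\}$, and more generally $ga\bmod m\le\tfrac{m-1}{2}$ for every $a\in\Phi_m$. If $g\neq 1$ then $2\le g\le\tfrac{m-1}{2}$, so the interval $\bigl(\tfrac{m-1}{2g},\tfrac{m-1}{g}\bigr]$ has the form $(x,2x]$ with $x\ge 1$, hence contains a power of $2$, say $2^k$ with $k\ge 1$. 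Then $2^k\le\tfrac{m-1}{g}\le\tfrac{m-1}{2}$ and $m$ is odd, so $2^k\in\Phi_m$; but $g\cdot 2^k$ is an integer lying in $\bigl(\tfrac{m-1}{2},m-1\bigr]$, so $g\cdot 2^k\bmod m>\tfrac{m-1}{2}$, contradicting $g\cdot 2^k\bmod m\in g\Phi_m=\Phi_m$. Thus $S=\{1\}$. This argument is uniform in the odd integer $m$, so it handles $m=p^2$ and $m=pq$ simultaneously; alternatively, for these specific $m$ one can compute $S$ directly, or appeal to known descriptions of the simple factors of these Fermat-type Jacobians in \cite{Shioda82}. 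The first two steps are routine bookkeeping with the cyclotomic action on differentials and standard CM theory; only the primitivity verification is nonformal.
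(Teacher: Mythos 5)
Your proposal is correct and follows essentially the same route as the paper: identify the CM type of $X$ from the $\zeta_m$-action on the differentials $x^{j-1}\,dx/y$ modulo the pullbacks from $C_p$ (and $C_q$), then invoke the Shimura--Taniyama criterion that primitivity is equivalent to triviality of the stabilizer of $\Phi_m$ in $(\Z/m\Z)^\times$, proved by an elementary interval argument. Your one small deviation is choosing a power of $2$ in the interval $\bigl(\tfrac{m-1}{2g},\tfrac{m-1}{g}\bigr]$ rather than an arbitrary integer as in the paper's Lemma \ref{lemma:primitive}, which has the added benefit of guaranteeing the chosen element is coprime to the odd $m$ and hence genuinely lies in the CM type, a point the paper's choice of $b$ leaves implicit.
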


By work of Shimura-Taniyama \cite{ShimuraTaniyama1961}, we can prove that $X$ is absolutely simple by proving that its CM-type is primitive. Before proving Proposition \ref{prop:X_simple_CMtype}, we provide two lemmas that will be useful.

\begin{lemma}\label{lemma:CMtype}
Let $m=pq$, where $p$ and $q$ are odd primes (not necessarily distinct). Let $X$ be the Prym variety obtained as a quotient of $J_m$ as in \eqref{eqn:Jpp_splitting} or \eqref{eqn:Jpq_splitting}. Then the CM-type of $X$ is $(\Q(\zeta_m); \Phi)$, where
$$\Phi=\{\sigma_j \mid \gcd(j,m)=1\text{ and } j\leq g\},$$ 
$g=(m-1)/2$, and $\sigma_j(\zeta_m)=\zeta_m^j$.
\end{lemma}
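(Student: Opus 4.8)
The plan is to compute the CM-type of the Prym variety $X$ directly from the action of $\mathbb{Z}[\zeta_m]$ on the holomorphic differentials of $C_m$, then subtract off the contributions coming from the quotient curves $C_p$ and $C_q$ (or just $C_p$ in the case $m = p^2$). First I would recall that the curve $C_m\colon y^2 = x^m - 1$ carries an automorphism $\zeta_m\colon (x,y) \mapsto (\zeta_m x, y)$ of order $m$, giving $\mathbb{Q}(\zeta_m) \hookrightarrow \End(J_m)_{\mathbb{Q}}$; since $\dim J_m = (m-1)/2$ and $[\mathbb{Q}(\zeta_m):\mathbb{Q}] = \varphi(m)$, the Jacobian $J_m$ itself is not CM of dimension $\varphi(m)/2$ unless $m$ is prime, but the Prym factor $X$ has dimension exactly $\varphi(m)/2$, so it is a CM abelian variety with CM by $\mathbb{Q}(\zeta_m)$ (this uses the CM decomposition and the fact that $X$ is the complement of the pulled-back factors). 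The key computational input is the standard basis of holomorphic differentials on $C_m$: the forms $\omega_{i} = x^{i-1}\,dx/y$ for $1 \le i \le g = (m-1)/2$, on which $\zeta_m$ acts by $\omega_i \mapsto \zeta_m^{i}\,\omega_i$, so $\omega_i$ lies in the eigenspace where $\zeta_m \in \mathbb{Z}[\zeta_m]$ acts by the character $\sigma_i$ (with $\sigma_i(\zeta_m) = \zeta_m^i$).

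Next I would identify which eigencharacters $\sigma_j$ survive in the Prym part. The pullback map $\pi_p^*\colon H^0(C_p, \Omega^1) \to H^0(C_m, \Omega^1)$ sends the differential $x^{k-1}\,dx/y$ on $C_p$ (with $1 \le k \le (p-1)/2$) to $\pi_p^*(x^{k-1}\,dx/y)$, which, under $\pi_p(x,y) = (x^q, y)$, becomes $x^{q(k-1)}\cdot q x^{q-1}\,dx/y = q\,x^{qk - 1}\,dx/y$, i.e.\ a multiple of $\omega_{qk}$; these are exactly the forms with index divisible by $q$ (and similarly $\pi_q^*$ contributes the forms with index divisible by $p$). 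So the differentials descending to the Prym variety $X$ are precisely the $\omega_j$ with $1 \le j \le g$ and $\gcd(j, m) = 1$ — for $m = pq$ we remove the multiples of $p$ and the multiples of $q$ (these index sets are disjoint among $1 \le j \le g$ since $j = pq$ is already $> g$), and for $m = p^2$ we remove the multiples of $p$. Counting these indices gives $\varphi(m)/2$, matching $\dim X$, which confirms we have not over- or under-counted. Therefore the CM-type of $X$ on $\mathbb{Q}(\zeta_m)$ is exactly $\Phi = \{\sigma_j : \gcd(j,m)=1,\ j \le g\}$, as claimed.

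The main obstacle I anticipate is making the descent of differentials rigorous rather than heuristic: one must check that $H^0(X, \Omega^1)$, viewed inside $H^0(J_m,\Omega^1) = H^0(C_m,\Omega^1)$ via the isogeny $J_m \sim X \times J_p \times J_q$, really is the span of the $\omega_j$ with $\gcd(j,m)=1$, and that this span is $\zeta_m$-stable with the stated eigencharacters. This is a matter of tracking the functoriality of the isogeny on cotangent spaces and using that the $\omega_i$ are simultaneous eigenvectors for the $\mathbb{Z}[\zeta_m]$-action, so the complement of the pulled-back subspaces is automatically a sub-$\mathbb{Q}(\zeta_m)$-module and hence a span of eigenlines; the indices appearing are then forced by the dimension count and the explicit description of $\pi_p^*, \pi_q^*$ above. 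A minor additional point is to note that the convention identifying a CM-type with a set of embeddings $\sigma_j\colon \mathbb{Q}(\zeta_m) \to \mathbb{C}$ — equivalently a set of residues $j \bmod m$ — must be compatible with the sign conventions on the Hodge decomposition $V_{\mathbb{C}} = V^{-1,0} \oplus V^{0,-1}$ from Section~\ref{sec:backgroundHodgeStructure}, so that $\Phi$ and its complex conjugate $\overline\Phi$ partition $(\mathbb{Z}/m\mathbb{Z})^\times$; this follows because $j$ and $m - j$ give conjugate embeddings and exactly one of them is $\le g$.
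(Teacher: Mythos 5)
Your proposal is correct and follows essentially the same route as the paper's proof: both identify the eigencharacters of the $\zeta_m$-action on the basis $x^{i-1}\,dx/y$ of holomorphic differentials, observe that the pullbacks under $\pi_p$ (and $\pi_q$) span exactly the eigenlines with index divisible by $p$ (resp.\ $q$), and conclude that the Prym part carries the characters $\sigma_j$ with $\gcd(j,m)=1$ and $j\le g$, with the dimension count $\varphi(m)/2$ confirming the identification.
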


\begin{proof}
For ease of notation, let $C\colon=C_m$. Since $C$ is a genus $g=(m-1)/2$ hyperelliptic curve, we may identify the complex uniformization of $J_m$ with $H^0(C_\C, \Omega_C^1)=\C\langle x^idx/y\mid 0\leq i< g\rangle$. We briefly split into two cases:

\textbf{Case 1: $p=q$.} Recall that there is a map 
 $\pi_p: (x,y)\mapsto (x^{p},y)$ from $C$ to $C_p$.  By pulling back regular differentials of $C_p$, we see that $\pi_p^*C_p$ corresponds to the vector subspace $$\C\left\langle{x^{ap-1}}dx/y\mid a=1, 2, \ldots,\frac{p-1}{2}\right\rangle$$ of $H^0(C_\C, \Omega_C^1)$. 
 Thus, the quotient $J_m/J_p$ (which is isogenous to $X$) has natural analytic uniformization given by the quotient of $\C\langle x^idx/y\mid 0\leq i< g\rangle$ by this vector space. The dimension of the new vector space is $\frac{m-1}{2}-\frac{p-1}{2}=\phi(m)/2$.

\textbf{Case 2: $p\not=q$.} In this case, there are two such maps:  $\pi_p: (x,y)\mapsto (x^{q},y)$ from $C$ to $C_p$ and $\pi_q: (x,y)\mapsto (x^{p},y)$ from $C$ to $C_q$. As above, we pull back regular differentials of $C_p$ and $C_q$. The quotient of $J_m$ by the Jacobians of the two curves (which is isogenous to $X$) has  natural analytic uniformization given by the quotient of
$\C\langle x^idx/y\mid 0\leq i< g\rangle$ by the vector subspace generated by
$$\C\left\langle{x^{aq-1}}dx/y\mid a=1, 2, \ldots,({p-1})/{2}\right\rangle \cup \C\left\langle{x^{ap-1}}dx/y\mid a=1, 2, \ldots,({q-1})/{2}\right\rangle.$$
The dimension of the new vector space is $\frac{m-1}{2}-\frac{p-1}{2}-\frac{q-1}{2}=\phi(m)/2$.

In both cases, the automorphism $\alpha\colon(x,y)\mapsto (\zeta_mx,y)$ induces an action of $\Q(\zeta_m)$ on the $(\phi(m)/2)$-dimensional vector space given above. Since $\alpha^*x^jdx/y=\zeta_m^{j+1} x^jdx/y$, we find that characters appearing in this uniformization will be of the form $\zeta_m\mapsto \zeta_m^j$ where $j$ is coprime to $m$. 

Thus, we can identify the CM type $\Phi$ of $\Gal(\Q(\zeta_m)/\Q)$ from the statement of the lemma as the CM-type of  $X$.
\end{proof}

\begin{lemma}\label{lemma:primitive}
Let $m$ be an odd composite number. Define the set 
$$S_m=\left\{1,\ldots, \frac{m-1}{2}\right\} \cap \Z/m\Z^\times.$$
Then $aS_m=S_m$ for some $a\in\Z/m\Z^\times$ if and only if $a=1$.
\end{lemma}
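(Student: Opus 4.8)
The reverse implication is immediate, so the plan is to show that $aS_m = S_m$ forces $a = 1$. Throughout I would identify each element of $(\Z/m\Z)^\times$ with its least positive residue in $\{1,\dots,m-1\}$, so that $S_m$ is literally a set of integers in $\{1,\dots,(m-1)/2\}$ and $aS_m$ is the set $\{\,as \bmod m : s \in S_m\,\}$. Since $m \ge 3$ we have $1 \in S_m$, and hence $a S_m = S_m$ already yields $a = a\cdot 1 \bmod m \in S_m$, i.e. $1 \le a \le (m-1)/2$.

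The engine of the argument is the observation that, because $m$ is odd, \emph{every} power of two that is at most $(m-1)/2$ lies in $S_m$: if $2^k \le (m-1)/2$ then $2^k$ is an integer in $\{1,\dots,(m-1)/2\}$ coprime to $m$, so $2^k \in S_m$. I would then prove, by induction on $k$, the bound
\[
a \le \frac{m-1}{2^{\,k+1}}\qquad\text{for every } k\ge 0 \text{ with } 2^{k}\le \tfrac{m-1}{2}.
\]
The case $k=0$ is the inequality $a\le (m-1)/2$ already noted. For the inductive step, assume $2^{k}\le (m-1)/2$; then $2^{k-1}\le(m-1)/2$ as well, so the previous case gives $a\le (m-1)/2^{k}$. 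Now $2^{k}\in S_m$, so $a\cdot 2^{k}\bmod m \in aS_m = S_m$; but $a\cdot 2^{k}\le m-1$ by the inductive bound, so no reduction modulo $m$ occurs and $a\cdot 2^{k}\bmod m$ is the integer $a\cdot 2^{k}$ itself. Membership in $S_m$ then forces $a\cdot 2^{k}\le (m-1)/2$, i.e. $a\le (m-1)/2^{k+1}$, closing the induction. To finish, take $k$ maximal with $2^{k}\le (m-1)/2$; then $2^{k+1}>(m-1)/2$, so the displayed bound gives $a \le (m-1)/2^{k+1} < 2$, and therefore $a = 1$.

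Essentially this is a "repeatedly double $a$ until it overshoots $(m-1)/2$'' argument, and I do not anticipate a real obstacle. The only point that needs care is verifying that $a\cdot 2^{k}$ does not wrap around modulo $m$; this is exactly what the inductive hypothesis $a\le (m-1)/2^{k}$ supplies, and it is what lets each step halve the bound on $a$. (I also note that the hypothesis that $m$ is composite is never used — only that $m$ is odd — and is presumably assumed simply because that is the case needed for Proposition \ref{prop:X_simple_CMtype}.)
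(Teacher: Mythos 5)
Your proof is correct, but it takes a genuinely different route from the paper's. The paper argues by exhibiting a single witness: given $1<a\le\frac{m-1}{2}$, it picks an integer $b$ in the open interval $\left(\frac{m-1}{2a},\frac{m-1}{a}\right)$, so that $\frac{m-1}{2}<ab<m-1$ and hence $ab\notin S_m$, contradicting $aS_m=S_m$. You instead iterate: since $m$ is odd, every power of $2$ not exceeding $\frac{m-1}{2}$ lies in $S_m$, and your doubling induction $a\le\frac{m-1}{2^{k+1}}$ squeezes $a$ below $2$. As for what each approach buys: the paper's argument is a one-step contradiction once the witness exists, but the witness must actually lie in $S_m$, i.e.\ $b$ must also be coprime to $m$, a point the interval argument does not address (and in the boundary case $a=\frac{m-1}{2}$ the interval is $(1,2)$, which contains no integer, so that case needs a separate witness such as $b=2$); your witnesses $2^k$ are coprime to $m$ for free -- this is exactly where the oddness of $m$ enters -- and no counting of integers in an interval is needed, at the mild cost of running an induction rather than a single multiplication. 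Your closing remark is also accurate: compositeness of $m$ is not needed for the lemma itself, which holds for every odd $m\ge 3$; it matters only for the application in Proposition \ref{prop:X_simple_CMtype}.
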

\begin{proof}
First note that $aS_m=S_m$ is clearly true if $a=1$. Also, $aS_m$ cannot equal $S_m$ if $a>\frac{m-1}{2}$ since then $a\cdot 1\not\in S_m$.

Now suppose $a\in S_m$ satisfies $1<a\leq \frac{m-1}{2}$. We will show that there is a value $b\in S_m$ such that $ab\not\in S_m$. 

Note that there is an integer in the interval $\left(\frac{m-1}{2a}, \frac{m-1}{a}\right)$. Indeed, the only way for this to not be true is if $(m-1)/a < (m-1)/2a+1$, which would imply $m-1 < (m-1)/2+a$. However this is not possible since $a\in S_m$ and, hence, $a<(m-1)/2$.

Now let $b$ be an integer in this interval: $\frac{m-1}{2a}<b< \frac{m-1}{a}$. This implies that  $(m-1)/2 <ab< m-1,$
which proves that $ab$ is not in $S_m$. Thus, we have proved the desired result.

\end{proof}

\begin{proof}[Proof of Proposition \ref{prop:X_simple_CMtype}]
As noted below the statement of  Proposition \ref{prop:X_simple_CMtype}, it is sufficient to prove that the CM-type of $X$ is primitive. Proposition 26 of \cite{ShimuraTaniyama1961} claims the following: A CM-type $S$ is primitive if and only if $\gamma S=S$ for $\gamma$ in the Galois group only when $\gamma$ is the identity.

We gave the CM-type $\Phi$ of $X$ in Lemma \ref{lemma:CMtype}. Identifying the automorphisms in $\Phi$ with the set $S_m=\{1,\ldots, (m-1)/2\}\cap \Z/m\Z^\times$ allows us to apply the results of Lemma \ref{lemma:primitive} and conclude that $\gamma \Phi=\Phi$ only when $\gamma$ is the identity element. Hence, $\Phi$ is a primitive CM type and $X$ is absolutely simple. 
\end{proof}

We can see the decompositions in Equations \eqref{eqn:Jpp_splitting} and \eqref{eqn:Jpq_splitting} at the level of Frobenius polynomials. Let $P_{A,\mathfrak p}(x)$ denote the Frobenius polynomial of the abelian variety $A$ at $\mathfrak p$. Then for $A=J_{p^2}$ we have $P_{A,\mathfrak p}(x)=P_{X,\mathfrak p}(x)\cdot P_{J_p,\mathfrak p}(x)$ and for $A=J_{pq}$ we have $P_{A,\mathfrak p}(x)=P_{X,\mathfrak p}(x)\cdot P_{J_p,\mathfrak p}(x)\cdot P_{J_q,\mathfrak p}(x)$ for all good primes $\mathfrak p$.

We can compute Frobenius polynomials for $J_m$ for more general odd, composite $m$ in Sage over good primes $\mathfrak p$. Upon factoring, we are able to see how the Jacobian variety $J_m$ decomposes into simple factors. This leads us to the following conjecture.
\begin{conjecture}
Let $m$ be a positive odd integer. Then the decomposition of $J_m$ into simple factors contains exactly one simple factor of dimension $\phi(d)/2$ for each divisor $d>1$ of $m$. Furthermore, if $p$ is a prime divisor of $m$ then the factor of dimension $\phi(p)/2$ is the Jacobian of the curve $C_p\colon y^2=x^p-1$.
\end{conjecture}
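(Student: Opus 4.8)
The plan is to exploit the order-$m$ automorphism $\alpha\colon(x,y)\mapsto(\zeta_m x,y)$ of $C_m$ together with the quotient maps $\pi_{m/d}\colon C_m\to C_d$, $(x,y)\mapsto(x^{m/d},y)$, indexed by the divisors $d\mid m$. As in the proof of Lemma~\ref{lemma:CMtype}, $\alpha^*$ acts on $H^{1,0}(C_m)=\C\langle x^i\,dx/y\mid 0\leq i\leq(m-3)/2\rangle$ by $x^i\,dx/y\mapsto\zeta_m^{i+1}x^i\,dx/y$; since complex conjugation swaps $H^{1,0}$ with $H^{0,1}$ and commutes with $\alpha^*$, the eigenvalues of $\alpha^*$ on $H^1(C_m,\C)$ are precisely the $m$-th roots of unity other than $1$, each of multiplicity one. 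Hence the characteristic polynomial of $\alpha^*$ on $H^1(C_m,\Q)$ is $(t^m-1)/(t-1)=\prod_{d\mid m,\,d>1}\Phi_d(t)$, with $\Phi_d$ the $d$-th cyclotomic polynomial, so the subalgebra $\Q[\alpha]\subseteq\End^0(J_m)$ is isomorphic to $\prod_{d\mid m,\,d>1}\Q(\zeta_d)$. Its block idempotents $e_d$ then give an isogeny decomposition $J_m\sim\prod_{d\mid m,\,d>1}B_d$ with $B_d:=e_dJ_m$; counting the eigenvalues that are primitive $d$-th roots of unity shows $\dim B_d=\phi(d)/2$ (consistently, $\sum_{d\mid m,\,d>1}\phi(d)/2=(m-1)/2=\dim J_m$), and the $\Q(\zeta_d)$-action makes $B_d$ an abelian variety of CM-type $\bigl(\Q(\zeta_d);\{\sigma_j\mid j\in S_d\}\bigr)$, where $S_d=\{1,\dots,(d-1)/2\}\cap(\Z/d\Z)^\times$ --- exactly the type appearing in Lemma~\ref{lemma:CMtype}. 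One should also note that, although $\alpha$ is defined only over $\Q(\zeta_m)$, each $e_d$ is $\Gal(\Q(\zeta_m)/\Q)$-invariant: a Galois element sends $\alpha$ to some power $\alpha^a$, which preserves every cyclotomic block. Hence the $B_d$ --- like the splittings \eqref{eqn:Jpp_splitting} and \eqref{eqn:Jpq_splitting} --- descend to $\Q$.

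Next I would prove that each $B_d$ is absolutely simple. The proof of Lemma~\ref{lemma:primitive} uses only that $1<a\leq(d-1)/2$, never the compositeness of the modulus, so it applies verbatim when $d$ is an odd prime, as does the reasoning in Proposition~\ref{prop:X_simple_CMtype}. Thus $\{\sigma_j\mid j\in S_d\}$ is a primitive CM-type for every odd $d>1$, and by Shimura--Taniyama \cite{ShimuraTaniyama1961} each $B_d$ is absolutely simple with $\End^0(B_d)=\Q(\zeta_d)$. Because $m$ is odd, the fields $\Q(\zeta_d)$ are pairwise non-isomorphic as $d$ ranges over divisors of $m$, so the $B_d$ are pairwise non-isogenous and $J_m\sim\prod_{d\mid m,\,d>1}B_d$ is indeed the decomposition into simple factors, with one factor $B_d$ for each divisor $d>1$ of $m$ (the term $d=1$ being vacuous).

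Finally I would show that $B_d$, up to isogeny, does not depend on which multiple $m$ of $d$ we start from, and in particular identify $B_p$ for prime $p$. The covering satisfies $\pi_{m/d}\circ\alpha_{C_m}=\alpha_{C_d}\circ\pi_{m/d}$, where $\alpha_{C_d}\colon(u,v)\mapsto(\zeta_d u,v)$ with $\zeta_d:=\zeta_m^{m/d}$ is the order-$d$ automorphism of $C_d$. Hence both $(\pi_{m/d})_*\colon J_m\to J_d$ and $\pi_{m/d}^*\colon J_d\to J_m$ intertwine the $\alpha$-actions, and therefore carry the $\Q(\zeta_e)$-isotypic part of the source into the $\Q(\zeta_e)$-isotypic part of the target for every $e\mid d$. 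In particular $(\pi_{m/d})_*$ restricts to a morphism $B_d(J_m)\to B_d(J_d)$, with a right inverse up to isogeny given by $\tfrac{1}{\deg\pi_{m/d}}\pi_{m/d}^*$ (using $(\pi_{m/d})_*\circ\pi_{m/d}^*=\deg(\pi_{m/d})\cdot\id$); since both $B_d(J_m)$ and $B_d(J_d)$ have dimension $\phi(d)/2$, this restriction is an isogeny, so $B_d(J_m)\sim B_d(J_d)$. For $d=p$ an odd prime the only divisor $>1$ of $p$ is $p$ itself, whence $J_p\sim B_p(J_p)$; combining, $B_p(J_m)\sim J_p$, the asserted factor of dimension $(p-1)/2=\phi(p)/2$. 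More generally $B_d(J_m)$ is isogenous to the top ``new'' factor of $J_d$, which is the Prym variety $X$ of Proposition~\ref{prop:X_simple_CMtype} when $d$ is composite.

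The step I expect to be the main obstacle is the last paragraph's bookkeeping: making ``$(\pi_{m/d})_*$ and $\pi_{m/d}^*$ respect the isotypic decompositions'' precise and checking it, which comes down to tracking, on first homology, exactly which $\alpha$-eigenvalues survive pushforward along $\pi_{m/d}$ --- namely the roots of unity of order dividing $d$ --- and confirming that this picks out precisely the idempotent $e_d$. Everything else (the characteristic polynomial of $\alpha^*$, the descent of the $e_d$ to $\Q$, and the extension of Lemmas~\ref{lemma:CMtype}, \ref{lemma:primitive} and Proposition~\ref{prop:X_simple_CMtype} to arbitrary odd $d>1$) should be routine, amounting to checking that the arguments already in the paper --- and in Shioda's analysis \cite{Shioda82} of these curves --- use nothing beyond what remains available in this slightly more general setting.
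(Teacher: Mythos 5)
The paper never proves this statement: it appears only as a conjecture, supported by the proven two-divisor cases (Equations \eqref{eqn:Jpp_splitting}--\eqref{eqn:Jpq_splitting} and Proposition \ref{prop:X_simple_CMtype}) and by numerically factoring Frobenius polynomials in Sage. Your argument is therefore a genuinely different route --- an actual proof sketch --- and I believe it is sound. Decomposing via the subalgebra $\Q[\alpha]\cong\prod_{d\mid m,\,d>1}\Q(\zeta_d)$ of $\End(J_{m,\overline{\Q}})\otimes\Q$ generalizes the paper's case-by-case use of the quotient maps $\pi_p,\pi_q$, recovers Lemma \ref{lemma:CMtype} blockwise (the block $B_d$ has dimension $\phi(d)/2$ and CM-type $(\Q(\zeta_d);S_d)$ by the same $x^i\,dx/y$ eigenvalue computation), and yields strictly more than the conjecture asks: the factors are absolutely simple, pairwise non-isogenous (their CM fields $\Q(\zeta_d)$ are pairwise non-isomorphic because $m$ is odd), and have $\End^0(B_d)=\Q(\zeta_d)$; Galois-stability of the idempotents, which you correctly flag, follows at once since the blocks are pairwise non-isomorphic fields, so the decomposition is over $\Q$ as in the paper's isogenies. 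The step you single out as the main obstacle is in fact routine: $\pi_{m/d}\circ\alpha_{C_m}=\alpha_{C_d}\circ\pi_{m/d}$ gives $(\pi_{m/d})_*\,f(\alpha_{C_m,*})=f(\alpha_{C_d,*})\,(\pi_{m/d})_*$ for every polynomial $f$, and the polynomial cutting out $e_d$ modulo $(t^m-1)/(t-1)$ reduces modulo $(t^d-1)/(t-1)$ to the one cutting out the top block of $J_d$, so pushforward and pullback respect the cyclotomic blocks and the push--pull identity makes $(\pi_{m/d})_*\colon B_d(J_m)\to B_d(J_d)$ an isogeny, identifying $B_p(J_m)$ with $J_p$ exactly as the conjecture requires. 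Two small patches when writing this up: the conjecture's ``each divisor $d$'' must of course be read as $d>1$, as you note; and your use of Lemma \ref{lemma:primitive} needs a touch more care than ``verbatim.'' For prime $d$ the coprimality of $b$ is automatic, but the boundary case $a=(d-1)/2$ must be handled separately, since then the interval $\left(\tfrac{d-1}{2a},\tfrac{d-1}{a}\right)=(1,2)$ contains no integer (take $b=2$, so $ab=d-1\notin S_d$); and for composite $d$ the integer produced in the paper's proof need not be coprime to $d$ (e.g.\ $d=21$, $a=5$ forces $b=3$), so the lemma's proof itself needs a repair before you lean on it --- a wrinkle inherited from the paper, not introduced by your argument. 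With those details attended to, your outline would upgrade the conjecture to a theorem, whereas the paper only establishes the $m=p^2$ and $m=pq$ cases and checks the rest computationally.
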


Thus far we have only considered odd values of $m$. In the case where $m$ is even, a result of \cite{EmoryGoodsonPeyrot} can be applied to determine that splitting of the Jacobian. 

\begin{proposition}\label{prop:jacobianpowers}\cite[Lemma 4.1]{EmoryGoodsonPeyrot}
Let $g=2k$ be an even integer, and $C_{2g+2}:y^2= x^{2g+2}+c$, where  $c\in\mathbb Q^{\times}$. Then we have the following isogeny over $\overline{\mathbb Q}$
$$\Jac(C_{2g+2}) \sim \Jac({C_{g+1}})^2,$$
where $C_{g+1}: y^2= x^{g+1}+c$.
\end{proposition}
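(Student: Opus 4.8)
The plan is to realize $\Jac(C_{2g+2})$ as isogenous to the product of the Jacobians of its two quotients by the involutions that, together with the hyperelliptic involution, form a copy of $(\Z/2\Z)^2$; each such quotient turns out to be (a twist of) $C_{g+1}$. Write $n=g+1$; since $g=2k$, the integer $n=2k+1$ is odd, so $C_{g+1}\colon y^2=x^{n}+c$ has genus $k$ while $C_{2g+2}\colon y^2=x^{2n}+c$ has genus $g=2k$. First I would write down two degree-two morphisms $C_{2g+2}\to C_{g+1}$. The first, defined over $\Q$, is $\phi_1\colon(x,y)\mapsto(x^2,y)$; it is well defined because $(x^2)^n+c=x^{2n}+c=y^2$, and it factors through the involution $(x,y)\mapsto(-x,y)$. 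The second, defined over $\overline{\mathbb Q}$ after adjoining $\sqrt c$ and an $n$-th root of $c$, is $\psi\colon(x,y)\mapsto\bigl(c^{2/n}/x^2,\ \sqrt c\,y/x^{n}\bigr)$; it lands on $C_{g+1}$ because $\bigl(c^{2/n}/x^2\bigr)^n+c=(c^2+cx^{2n})/x^{2n}=cy^2/x^{2n}$ on $C_{2g+2}$, and it factors through the complementary involution $(x,y)\mapsto(-x,-y)$.

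The crux is the effect on regular differentials. Taking the bases $\{x^i\,dx/y:0\le i\le 2k-1\}$ of $H^0(C_{2g+2},\Omega^1)$ and $\{u^j\,du/v:0\le j\le k-1\}$ of $H^0(C_{g+1},\Omega^1)$, a direct substitution gives $\phi_1^*(u^j\,du/v)=2x^{2j+1}\,dx/y$ and $\psi^*(u^j\,du/v)=(\text{nonzero constant})\cdot x^{2k-2-2j}\,dx/y$. Hence $\phi_1^*$ carries $H^0(C_{g+1},\Omega^1)$ isomorphically onto the span of the odd-index differentials and $\psi^*$ carries it isomorphically onto the span of the even-index differentials, so
\[
H^0(C_{g+1},\Omega^1)\oplus H^0(C_{g+1},\Omega^1)\longrightarrow H^0(C_{2g+2},\Omega^1),\qquad(\alpha,\beta)\longmapsto\phi_1^*\alpha+\psi^*\beta,
\]
is an isomorphism of $2k$-dimensional vector spaces. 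This step is exactly where the hypothesis that $g$ is even is used: it forces the source to have dimension $2\cdot\tfrac g2=g$, equal to the target, and it makes the odd/even exponent ranges together exhaust $\{0,1,\dots,2k-1\}$.

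Finally I would pass from differentials to abelian varieties: for a finite morphism $f\colon C\to C'$ the cotangent space of $\Jac(C)$ at the origin is identified with $H^0(C,\Omega^1)$, and under this identification the cotangent map of the induced homomorphism $f_*\colon\Jac(C)\to\Jac(C')$ is the pullback $f^*\colon H^0(C',\Omega^1)\to H^0(C,\Omega^1)$. Applying this to $\phi_1$ and $\psi$, the homomorphism
\[
\Jac(C_{2g+2})\longrightarrow\Jac(C_{g+1})\times\Jac(C_{g+1}),\qquad z\longmapsto(\phi_{1*}z,\ \psi_*z),
\]
has cotangent map at the origin equal to the isomorphism of the previous paragraph; since source and target have the same dimension $g$, it is an isogeny, which gives $\Jac(C_{2g+2})\sim\Jac(C_{g+1})^2$ over $\overline{\mathbb Q}$.

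I do not expect a genuine obstacle here. The points that require care are the genus/exponent bookkeeping (which is precisely what pins down the role of the parity of $g$) and the observation that $\psi$, unlike $\phi_1$, is in general defined only over $\overline{\mathbb Q}$ — for instance it already descends to $\Q$ when $c=1$ — consistent with the isogeny being asserted over $\overline{\mathbb Q}$ rather than $\Q$. As an equivalent, more structural route, one can instead invoke the Kani--Rosen decomposition for the action of $(\Z/2\Z)^2=\langle(x,y)\mapsto(-x,y),\ (x,y)\mapsto(-x,-y)\rangle$ on $C_{2g+2}$, whose third nontrivial element is the hyperelliptic involution: two of the three intermediate quotients are isomorphic over $\overline{\mathbb Q}$ to $C_{g+1}$, while the third quotient together with the full quotient is $\mathbb P^1$, so the Kani--Rosen isogeny collapses to $\Jac(C_{2g+2})\sim\Jac(C_{g+1})^2$.
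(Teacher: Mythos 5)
Your proof is correct, and it follows essentially the same route as the source the paper relies on: the paper itself gives no argument for this proposition but imports it from \cite{EmoryGoodsonPeyrot} (Lemma 4.1), where the isogeny is likewise obtained from the two degree-two quotient maps $(x,y)\mapsto(x^2,y)$ and the $\overline{\Q}$-defined map through the involution $(x,y)\mapsto(-x,-y)$, with the pullbacks on regular differentials splitting into the odd- and even-exponent pieces exactly as in your computation. Your parity bookkeeping, the identification of the cotangent map of $(\phi_{1*},\psi_*)$ with $\phi_1^*\oplus\psi^*$, and the remark that the second map only descends to $\Q$ after adjoining the relevant roots of $c$ are all accurate, and the Kani--Rosen alternative you mention is a valid structural restatement of the same decomposition.
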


This allows us to prove the following statement.
\begin{corollary}
Let $m=p^2$ or $pq$. Then the Jacobian of the hyperelliptic curve $C_{2m}$ satisfies the following isogeny over $\overline\Q$
\begin{align*}
J_{2m} \sim \begin{cases}
(X\times J_p)^2 & \text{if } m=p^2,\\
(X\times J_p \times J_q )^2 & \text{if } m=pq.
\end{cases}\end{align*}
\end{corollary}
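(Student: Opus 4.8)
The plan is to read this off Proposition \ref{prop:jacobianpowers} together with the Prym decompositions of Equations \eqref{eqn:Jpp_splitting} and \eqref{eqn:Jpq_splitting}. First I would check that Proposition \ref{prop:jacobianpowers} applies: take $c=-1$ and write $2g+2=2m$, so that $g+1=m$ and $g=m-1$; the hypothesis that $g=2k$ is even is precisely the hypothesis that $m$ is odd, which holds because $m=p^2$ or $m=pq$ with $p,q$ odd primes. Applying the proposition to $C_{2m}\colon y^2=x^{2m}-1$ then gives an isogeny $J_{2m}=\Jac(C_{2m})\sim \Jac(C_m)^2=J_m^2$ (a priori over $\overline{\Q}$).

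Next I would substitute the known factorization of $J_m$. When $m=p^2$, Equation \eqref{eqn:Jpp_splitting} gives $J_m\sim X\times J_p$ over $\Q$, with $X$ absolutely simple by Proposition \ref{prop:X_simple_CMtype}, so $J_{2m}\sim (X\times J_p)^2$; when $m=pq$, Equation \eqref{eqn:Jpq_splitting} gives $J_m\sim X\times J_p\times J_q$ over $\Q$, so $J_{2m}\sim (X\times J_p\times J_q)^2$. A dimension count confirms consistency: $\dim(X\times J_p)=\phi(p^2)/2+(p-1)/2=(p^2-1)/2=(m-1)/2=\dim J_m$, and similarly $\dim(X\times J_p\times J_q)=(pq-1)/2=\dim J_m$ in the composite case, so no factors are lost in the power isogeny.

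The one step requiring genuine care is the field over which the isogeny is defined, since Proposition \ref{prop:jacobianpowers} is stated over $\overline{\Q}$ whereas the decompositions \eqref{eqn:Jpp_splitting} and \eqref{eqn:Jpq_splitting} are over $\Q$. To pin this down I would use the two $\Q$-rational involutions of $C_{2m}$, namely $\iota\colon(x,y)\mapsto(-x,y)$ and $h\iota\colon(x,y)\mapsto(-x,-y)$ where $h$ is the hyperelliptic involution; these generate a Klein four-group, the quotients $C_{2m}/\iota\cong C_m$ and $C_{2m}/h\iota\cong C'\colon y^2=x^{m+1}-x$ have complementary genus $(m-1)/2$, and the remaining two subquotients are $\mathbb{P}^1$, so a standard idempotent-relation argument (Kani--Rosen) yields $J_{2m}\sim \Jac(C_m)\times \Jac(C')$ over $\Q$. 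A short change of variables ($x\mapsto 1/x$, absorbing $x^{(m+1)/2}$ into $y$, then $x\mapsto -x$, using that $m$ is odd) identifies $C'$ with the quadratic twist $C_m^{(-1)}$, which is isomorphic to $C_m$ over $\Q(\sqrt{-1})$; hence $J_{2m}\sim J_m^2$ and the displayed decomposition hold over $\Q(\sqrt{-1})$ (and over $\overline{\Q}$), while over $\Q$ one should replace the second factor $J_m$ by $\Jac(C_m)^{(-1)}$. I expect this twist bookkeeping, rather than the algebra of chaining the isogenies, to be the only delicate point in the proof.
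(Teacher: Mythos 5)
Your first two paragraphs are exactly the paper's proof: the author applies Proposition \ref{prop:jacobianpowers} with $2m=2g+2$ (so $g=m-1$ is even because $m$ is odd) to get $J_{2m}\sim (J_m)^2$, and then substitutes the splittings \eqref{eqn:Jpp_splitting} and \eqref{eqn:Jpq_splitting}; your dimension check is a harmless extra. Where you genuinely diverge is the third paragraph, and you are right to flag it: Proposition \ref{prop:jacobianpowers} is stated only over $\overline{\Q}$, while the corollary asserts an isogeny over $\Q$, and the paper's two-line proof cites the proposition without addressing the descent at all. Your Kani--Rosen argument with the involutions $(x,y)\mapsto(-x,\pm y)$ is correct and gives the genuinely $\Q$-rational statement $J_{2m}\sim \Jac(C_m)\times\Jac(C')$ with $C'\cong C_m^{(-1)}$, the quadratic twist by $-1$, so that the squared decomposition is guaranteed only over $\Q(\sqrt{-1})$ (a fortiori over $\overline{\Q}$). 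Moreover the twist is in general not superfluous: already for the elliptic factor $y^2=x^3-1$ the Frobenius trace at $7$ changes sign under the $(-1)$-twist, so $J_3^{(-1)}$ is not $\Q$-isogenous to $J_3$, and hence the statement as printed (``over $\Q$'') is not delivered by the cited lemma. In short, your proposal matches the paper's route on the main chain and is more careful than the paper on the field of definition; the only cost is the Kani--Rosen/twist bookkeeping, which is exactly the point the paper's citation-only proof glosses over.
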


\begin{proof}
In both cases, $J_{2m}\sim (J_m)^2$ by Lemma 4.1 of \cite{EmoryGoodsonPeyrot} since the dimension of $J_{2m}$ is even: we set $2m=2g+2$ and find that $g=m-1$ is even since $m$ is odd. The result then follows from Equations \eqref{eqn:Jpp_splitting} and \eqref{eqn:Jpq_splitting}.
\end{proof}

\subsection{Proof of  Theorem \ref{thm:degeneracyxm}}\label{sec:proofdegeneracyxm}
We are now in a position to prove that for any odd, composite $m$, the Jacobian $J_m$ of $y^2=x^m-1$ is degenerate.
\begin{proof}
Let $m$ be an odd, composite integer divisible by $pq$, where $p$ and $q$ are primes (not necessarily distinct). As in the the beginning of Section \ref{sec:JacobianSplitting}, we can view the Jacobian of the curve $y^2=x^{pq}-1$ as a factor of $J_m$ via a map on the curve: $\pi_{pq}\colon C_m \to C_{pq}$ defined by $(x,y)\mapsto (x^{m/pq},y)$. We proved in Lemma \ref{lemma:exceptionalHodge} that the Jacobian variety $J_{pq}$ has exceptional Hodge cycles. Thus, $J_m$ has degenerate factors and is, therefore, also degenerate.
\end{proof}


\section{Examples}\label{sec:DegenerateExamples}
In this section we explore the various forms of degeneracy for several Jacobian varieties $J_m$, each illustrating different phenomena that can occur. The examples are presented in a non-hodgepodge manner: we carefully examine degeneracy in the Mumford-Tate groups, the Hodge rings and  Hodge groups, and the Sato-Tate groups of the  varieties. Most of the computations were done in Sage (code available at \cite{GoodsonGithub}).

\subsection{$y^2=x^9-1$}\label{sec:Example9}

We first consider the genus 4 curve $C_9\colon y^2=x^9-1$. The isogeny in Equation \eqref{eqn:Jpp_splitting} tells us that its Jacobian factors as $J_9\sim X \times J_3$. Here $J_3$ denotes the CM elliptic curve $y^2=x^3-1$ and, by Proposition \ref{prop:X_simple_CMtype}, $X$ is an absolutely simple, 3-dimensional abelian variety. Lemma \ref{lemma:CMtype} tells us that the CM field of  $X$ is $\Q(\zeta_9)$. We note that $X$ is not the Jacobian of a curve. Indeed, by results of \cite{Kilicer2018}, (up to isomorphism) there is only one genus 3 curve with CM field $\Q(\zeta_9)$ and primitive CM type: the Picard curve $y^3=x^4-x$. However, by computing and comparing Frobenius polynomials of $y^2=x^9-1$ and $y^3=x^4-x$, we can see that the latter does not appear as a factor of the former.

There are some results in the literature that give us information about the degeneracy of $J_9$. Remark 5.6 of \cite{Lombardo2021} shows that the projection $\MT(J_9)\rightarrow \MT(X)$ is an isomorphism; this is proved using results similar to those in Section \ref{sec:DegneracyMT}. In \cite[Example 6.1]{Shioda82}, Shioda works out the Hodge ring for $J_9$. In particular, Shioda uses a result that is a special case of  Lemma \ref{lemma:exceptionalHodge} to determine that there are exceptional Hodge cycles in codimension 2. The exceptional cycles in codimension 2 can be described by
$$\mathcal B^2(J_9)/\mathcal D^2(J_9)=\{\omega_1\wedge\omega_4\wedge\overline\omega_3\wedge\overline\omega_2, \;\; \omega_2\wedge\omega_3\wedge\overline\omega_4\wedge\overline\omega_1 \},$$
where $\omega_j=x^{j-1}dx/y$ is a standard basis element of $H^1(C_m)$. There are no exceptional cycles in other codimensions.

We now show how we can determine the Hodge group of $J_9$ from this. Recall that the Hodge group $\Hg(J_9)$ is an algebraic subgroup of $\SL_V$ and that $\Hg(J_9)_\C$ contains $h(\mathbb S^1)$. Furthermore, Theorem \ref{thm:Hodge} tells us $H^{2d}_\text{Hodge}(J_9)= H^{2d}(J_9,\Q)^{\Hg(J_9)}.$

We can identify $\SL_V$ with the group of $2g\times2g$ matrices with determinant 1. Since  $J_9$ is of CM-type, Theorem 17.3.5 of \cite{BirkenhakeLange2004} tells us that its Hodge group is commutative. Thus, we can say that elements of $\Hg(J_9)$ are of the form
$$U=\diag(U_1, U_2, U_3, U_4),$$
where $U_i=\diag(u_i,\overline u_i)$ and $u_i\in \mathbb S^1$, and where there may be some relations among the $U_i$.  We compute $U\cdot v$ for each element $v$ of the space of Hodge cycles in order to determine what relations are required in order to obtain $U\cdot v=v$. For example,
\begin{align*}
    U\cdot(\omega_1\wedge\omega_4\wedge\overline\omega_3\wedge\overline\omega_2)&=u_1u_4\overline u_2\overline u_3(\omega_1\wedge\omega_4\wedge\overline\omega_3\wedge\overline\omega_2).
\end{align*}
This yields the relation $u_4=\overline u_1u_2u_3$. The other exceptional cycle yields the same relation. Thus, we may conclude that the elements of $\Hg(J_9)$ are of the form
$$U=\diag(U_1,U_2,U_3,\overline U_1U_2U_3).$$

\subsubsection{Sato-Tate group}

Since the Mumford-Tate conjecture holds for $J_9$, the Hodge group equals the identity component of the algebraic Sato-Tate group. The identity component of the Sato-Tate group is a maximal compact subgroup of this, and so we have the following result. 

\begin{proposition}\label{prop:STidentity9}
Up to conjugation in $\USp(8)$, the identity component of the Sato-Tate group of $J_9$  is isomorphic to $\U(1)^3$. More specifically, 
$$\ST^0(J_9)\simeq\langle\diag(U_1,U_2,U_3,\overline U_1U_2U_3)\rangle.$$
\end{proposition}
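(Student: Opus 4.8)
The plan is to derive Proposition~\ref{prop:STidentity9} by combining the structural facts about $\Hg(J_9)$ established just above the statement with the general principle, recorded in Section~\ref{sec:backgroundMTConjecture}, that when the Mumford--Tate conjecture holds the Hodge group determines the identity component of the Sato--Tate group up to conjugacy in $\USp(2g)$. Concretely, I would first recall that the Mumford--Tate conjecture is known for $J_9$: it is an abelian variety of CM type, so the conjecture holds by the results cited in Section~\ref{sec:backgroundMTConjecture} (or alternatively one may invoke dimension/Commelin-type arguments via the splitting $J_9\sim X\times J_3$). Hence $\Hg(J_9)_{\Q_\ell}$ equals the identity component of $G^1_{J_9,\ell}$, and by the algebraic Sato--Tate conjecture (also known in this CM setting by \cite[Theorem 6.6]{Banaszak2015}) we have $\AST^0(J_9)\otimes_{\Q}\C = \Hg(J_9)\otimes_{\Q}\C$.

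Next I would pin down $\Hg(J_9)$ explicitly. The preceding discussion already shows, via Theorem~\ref{thm:Hodge} applied to the exceptional cycles $\omega_1\wedge\omega_4\wedge\overline\omega_3\wedge\overline\omega_2$ and $\omega_2\wedge\omega_3\wedge\overline\omega_4\wedge\overline\omega_1$, that every element of $\Hg(J_9)_\C$ has the form $\diag(U_1,U_2,U_3,\overline U_1U_2U_3)$ with $U_i=\diag(u_i,\overline u_i)$. I would complete the argument by checking the reverse inclusion: the torus $\{\diag(U_1,U_2,U_3,\overline U_1U_2U_3)\}$ lies in $\Sp_8$, contains the image $h(\mathbb S^1)$ of the Hodge cocharacter (here one uses the explicit CM-type $\Phi$ from Lemma~\ref{lemma:CMtype} and its dual to read off which Hodge classes are fixed, confirming that the relation $u_4=\overline u_1 u_2 u_3$ is the \emph{only} relation), and fixes precisely the span of the Hodge cycles computed by Shioda; by minimality of the Hodge group among $\Q$-algebraic subgroups of $\SL_V$ whose complexification contains $h(\mathbb S^1)$, it cannot be smaller, so equality holds.

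Finally, passing to a maximal compact subgroup: the real torus $\U(1)^3$ embedded as $\diag(U_1,U_2,U_3,\overline U_1U_2U_3)$ is a compact subgroup of $\USp(8)$ of the same dimension as $\Hg(J_9)$ (which is a $\Q$-torus of rank $3$), hence is a maximal compact subgroup of $\Hg(J_9)\otimes_\Q\C$. Therefore $\ST^0(J_9)$ is conjugate in $\USp(8)$ to $\langle\diag(U_1,U_2,U_3,\overline U_1U_2U_3)\rangle\simeq\U(1)^3$, which is the claim.

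The main obstacle I anticipate is the bookkeeping needed to be sure that $u_4=\overline u_1u_2u_3$ is the full set of relations cutting out $\Hg(J_9)$, rather than just a consequence of the two displayed exceptional cycles: one must verify that no further Hodge classes (in codimensions $1$, $3$, or $4$) impose additional constraints, which is exactly what Shioda's computation of the entire Hodge ring of $J_9$ in \cite[Example 6.1]{Shioda82} provides. Once that input is granted, the rest is a routine matter of matching the torus to its maximal compact subgroup and invoking the Mumford--Tate and algebraic Sato--Tate results already available for CM abelian varieties.
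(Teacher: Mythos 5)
Your overall route is the same as the paper's: use the CM structure to force $\Hg(J_9)$ to be a diagonal torus, read off the relation $u_4=\overline u_1u_2u_3$ from Shioda's exceptional cycles via Theorem \ref{thm:Hodge}, invoke the Mumford--Tate and algebraic Sato--Tate results (both known in the CM case) to identify $\Hg(J_9)$ with the identity component of the algebraic Sato--Tate group, and take a maximal compact subgroup inside $\USp(8)$. That is exactly the paper's argument.

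The one step that does not work as stated is your justification of the reverse inclusion. Minimality of the Hodge group among $\Q$-algebraic subgroups of $\SL_V$ whose complexification contains $h(\mathbb S^1)$ gives only the containment $\Hg(J_9)\subseteq T$, where $T=\{\diag(U_1,U_2,U_3,\overline U_1U_2U_3)\}$; it cannot show that $\Hg(J_9)$ is not a \emph{proper} subtorus of $T$, so ``it cannot be smaller, so equality holds'' is the inverted implication. Likewise, knowing that $T$ fixes precisely Shioda's Hodge classes does not by itself rule out a proper subtorus fixing the same classes in $H^*(J_9)$, since Theorem \ref{thm:Hodge} constrains invariants only for the actual Hodge group. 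The clean way to close this, consistent with what the section already provides, is a dimension count: by Lombardo's result quoted at the start of Section \ref{sec:Example9}, $\MT(J_9)\to\MT(X)$ is an isomorphism, and $X$ is an absolutely simple $3$-dimensional CM abelian variety (nondegenerate, since $\dim\leq 3$), so $\Hg(J_9)$ is a torus of rank $3$; combined with your forward inclusion $\Hg(J_9)\subseteq T$ and the fact that $T$ is a connected $3$-dimensional torus, equality follows. (Alternatively, one can cite the independent computation $\ST^0(J_9)\simeq\U(1)^3$ from Theorem 6.9 of \cite{EmoryGoodsonPeyrot}, which the paper mentions as corroboration.) With that repair, your proposal coincides with the paper's proof.
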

This matches the result of Theorem 6.9 of \cite{EmoryGoodsonPeyrot}, though here we used a different method to obtain the result and we give the additional relations needed for the embedding in $\USp(8)$.

In order to determine the full Sato-Tate group of $J_9$, we first need to know the minimal field $L$ for which $\ST((J_9)_L)$ is connected. As noted in Section \ref{sec:DegeneracyST}, this field can be computed using results of Zywina. In fact, Zywina determines that this field is equal to the CM field of $C_9$ (see Section 1.7 of \cite{zywina2020determining}). Hence, $\ST(J_9)/\ST^0(J_9) \simeq \Gal(\Q(\zeta_9)/\Q).$

This is enough information to compare moment statistics for the identity component of the Sato-Tate group to  numerical moments since $\ST^0(J_9)=\ST((J_9)_{\Q(\zeta_9)})$. The numerical $a_1$-moments in Table \ref{table:momemntsJ9zeta9} were computed over the  field $\Q(\zeta_9)$ and for all primes up to $p<2^{23}$ using an algorithm described in \cite{HarveySuth2014} and \cite{HarveySuth2016}. Note that the values of the $a_1$-moments were rounded in order to fit the values in the table.

\begin{table}[h]
\begin{tabular}{|l|l|l|l|l|l|}
\hline
&$M_2$ & $M_4$ & $M_6$ & $M_8$& $M_{10}$\\ 
\hline
$\mu_1$ & 8&216&8000 & 343000& 16003008 \\
$a_1$  &  8.01253&216.204&7997.25&342072& 15901600\\
\hline
\end{tabular}
\caption{Table of $\mu_1$- and $a_1$-moments for $\ST(J_9)$ over $\mathbb Q(\zeta_9)$.}\label{table:momemntsJ9zeta9}\vspace{-.2in}
\end{table}
The errors for the even moments are within a margin of $1\%$ and are consistent with the errors for the odd moments (which should all be zero). Choosing a larger bound for $p$ will lead to better estimates.

For nondegenerate abelian varieties, we can determine generators of the component group of the Sato-Tate group through the twisted Lefschetz group (see \cite{Banaszak2015}). The twisted Lefschetz  group, denoted $\TL(A)$, is a closed algebraic subgroup of  $\Sp_{2g}$ defined by
\begin{align*}
\TL(A):=\bigcup_{\tau \in \Gal(\overline{F}/F)} \LL(A)(\tau),
\end{align*}
where $\LL(A)(\tau):=\{\gamma \in \Sp_{2g}\mid \gamma \alpha \gamma^{-1}=\tau(\alpha) \text{ for all }\alpha \in \End(A_{\overline{F}})_\mathbb{Q}\}$.

We can compute the twisted Lefschetz group for the Jacobian variety $J_9$ even though this is a degenerate abelian variety. The nontrivial Lefschetz sets $\LL(J_9)(\tau)$ are those coming from elements $\tau$ in $G=\Gal(\Q(\zeta_9)/\Q)$ since $\Q(\zeta_9)$ is the endomorphism field of $J_9$. The Galois group $G$ is generated by the order-6 automorphism $\tau_2\colon \zeta_9\to\zeta_9^2$ and we have that the component group of $\bigcup_{\tau \in G}\LL(A)(\tau)$ is generated by 
\begin{align}\label{eqn:J9gamma} \gamma= \begin{pmatrix}0&I&0&0\\0&0&0&I\\0&0&J&0\\J&0&0&0 \end{pmatrix}.
\end{align}
The matrix $\gamma$ was computed using the techniques of \cite{EmoryGoodson2020} and \cite{GoodsonCatalan}.

We now compute moment statistics of the group $\langle\diag(U_1,U_2,U_3,\overline U_1U_2U_3),\; \gamma\rangle$ and compare them to the numerical moments over $\Q$. The numerical moments in Table \ref{table:momemntsJ9Q} were computed for all primes up to $p<2^{23}$ using an algorithm described in \cite{HarveySuth2014} and \cite{HarveySuth2016}. Note that the values of the $a_1$-moments were rounded in order to fit the values in the table.

\begin{table}[h]
\begin{tabular}{|l|l|l|l|l|l|l|}
\hline
&$M_2$ & $M_4$ & $M_6$ & $M_8$& $M_{10}$& $M_{12}$\\ 
\hline
$\mu_1$ & 2& 38&1340 &57190& 2667252& 131481812\\
$a_1$  & 2  &38 &1338 &57010& 2649180& 129958000 \\
\hline
\end{tabular}
\caption{Table of $\mu_1$- and  $a_1$-moments for $J_9$ over $\mathbb Q$.}\label{table:momemntsJ9Q}\vspace{-.2in}
\end{table}
As in Table \ref{table:momemntsJ9zeta9}, the errors for the even moments shown in Table \ref{table:momemntsJ9Q} are within a margin of $1\%$ and are consistent with the errors for the odd moments. This leads us to make the following conjecture.

\conjectureST

As further evidence supporting this claim, we recall that the Sato-Tate group of $J_9$ is connected over the field $\Q(\zeta_9)$ and  note that the component group of the above group is isomorphic to the Galois group $G=\Gal(\Q(\zeta_9)/\Q)$.

\subsubsection{$y^2=x^{18}-1$}\label{sec:Example18}

We first apply Proposition \ref{prop:jacobianpowers} to the Jacobian of the curve $C_{18}\colon y^2=x^{18}-1$ and find that
\begin{align*}
    J_{18}\sim (J_9)^2.
\end{align*}
Hence, by the discussion in Section \ref{sec:backgroundHodgeMT}, the Hodge group $\Hg(J_{18})$ is isomorphic to $\Hg(J_9)$, which acts on $V(J_{18})$ diagonally. Since the Mumford-Tate conjecture holds for $J_9$, it also holds for $J_{18}\sim (J_9)^2$ (see Section \ref{sec:backgroundMTConjecture}). Thus, the Hodge group equals the identity component of the algebraic Sato-Tate group. The identity component of the Sato-Tate group is a maximal compact subgroup of this, and so we have the following result.

\begin{proposition}
Up to conjugation in $\USp(16)$, the identity component of the Sato-Tate group of $J_{18}$ is isomorphic to $(\U(1)^3)_2$. More specifically, 
$$\ST^0(J_{18})=\langle\diag(U_1,U_2,U_3,\overline U_1U_2U_3,U_1,U_2,U_3,\overline U_1U_2U_3)\rangle.$$
\end{proposition}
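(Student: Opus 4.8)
The plan is to leverage the fact that the statement is, after the reductions already sketched just before the proposition, a direct translation of Proposition~\ref{prop:STidentity9} to the power $(J_9)^2$. First I would record the isogeny: applying Proposition~\ref{prop:jacobianpowers} with $2g+2=18$, so $g=8=2\cdot 4$ is even and $c=-1\in\Q^\times$, gives $J_{18}=\Jac(C_{18})\sim\Jac(C_9)^2=(J_9)^2$ (also recorded in the Corollary following Proposition~\ref{prop:jacobianpowers} with $m=3^2$). Since isogenous abelian varieties have isomorphic rational Hodge structures, $\Hg(J_{18})\cong\Hg((J_9)^2)$; by the discussion in Section~\ref{sec:backgroundHodgeMT}, $\Hg((J_9)^2)$ is identified with $\Hg(J_9)$ acting diagonally on $V((J_9)^2)=V(J_9)^2$. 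From the computation preceding Proposition~\ref{prop:STidentity9}, the elements of $\Hg(J_9)$ acting on $V(J_9)$ have the form $\diag(U_1,U_2,U_3,\overline U_1U_2U_3)$ with $U_i$ the image of an element of $\U(1)$ in $\SU(2)$; hence on $V(J_{18})$ the elements of $\Hg(J_{18})$ have the block form $\diag(U_1,U_2,U_3,\overline U_1U_2U_3,U_1,U_2,U_3,\overline U_1U_2U_3)$.

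Next I would invoke the Mumford--Tate conjecture. As noted in Section~\ref{sec:Example9}, it holds for $J_9$ (because $J_9\sim X\times J_3$ with $\dim X=3$ and $J_3$ of CM type, so Commelin's theorem~\cite{Commelin2019} applies); applying Commelin's theorem once more yields the conjecture for $(J_9)^2\sim J_{18}$. By the results of \cite[Section~3]{Banaszak2015}, this identifies $\Hg(J_{18})$ with the identity component $\AST^0(J_{18})$ of the algebraic Sato--Tate group, and $\ST^0(J_{18})$ is then a maximal compact subgroup of $\AST^0(J_{18})\otimes_\Q\C$ contained in $\USp(16)$. Because the symplectic pairing on $V(J_{18})$ is the orthogonal direct sum of two copies of the pairing on $V(J_9)$, maximal compactness can be read off block by block, so after a suitable conjugation in $\USp(16)$ putting the form in standard block shape, the maximal compact subgroup is exactly the block-diagonal copy of $\ST^0(J_9)\cong\U(1)^3$, i.e. $(\U(1)^3)_2$, generated by $\diag(U_1,U_2,U_3,\overline U_1U_2U_3,U_1,U_2,U_3,\overline U_1U_2U_3)$. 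This is the asserted description.

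There is no substantial obstacle here: the argument is a chain of applications of results quoted earlier. The one point worth a sentence of care is the last step — checking that doubling the representation of $\Hg(J_9)$ still lands (up to conjugacy) in $\USp(16)$ in the indicated block form and that the block-diagonal torus is genuinely maximal compact in $\AST^0(J_{18})\otimes\C$ — but this is immediate from the direct-sum structure of the polarization, since both ``being symplectic'' and ``being maximal compact'' are inherited factorwise. I would therefore keep the written proof to essentially the three reductions above, citing Proposition~\ref{prop:jacobianpowers}, Section~\ref{sec:backgroundHodgeMT}, \cite{Commelin2019}, \cite{Banaszak2015}, and Proposition~\ref{prop:STidentity9}.
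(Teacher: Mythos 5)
Your proposal is correct and follows essentially the same route as the paper: use Proposition \ref{prop:jacobianpowers} to get $J_{18}\sim (J_9)^2$, identify $\Hg(J_{18})$ with $\Hg(J_9)$ acting diagonally (Section \ref{sec:backgroundHodgeMT}), transfer the Mumford--Tate conjecture from $J_9$ to $(J_9)^2$, and then read off $\ST^0(J_{18})$ as the maximal compact subgroup of $\AST^0(J_{18})\otimes\C$ using Proposition \ref{prop:STidentity9}. The extra sentence you add about maximal compactness being inherited blockwise is a fine (if implicit in the paper) point of care, but it does not change the argument.
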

This matches the result in Section 6.6 of \cite{EmoryGoodsonPeyrot}, though here we give the additional relations needed for the embedding in $\USp(16)$. These additional relations enable us to compute moment statistics for the identity component (see Table \ref{table:momemntsJ18zeta}).
\begin{table}[h]
\begin{tabular}{|l|l|l|l|l|l|l|}
\hline
&$M_2$ & $M_4$ & $M_6$ & $M_8$& $M_{10}$& $M_{12}$\\ 
\hline
$\mu_1$ & 32& 3456 & 512000 & 87808000 & 16387080192 & 3231289442304\\
\hline
\end{tabular}
\caption{Table of $\mu_1$-moments for $J_{18}$ over $\mathbb Q(\zeta_{18})$.}\label{table:momemntsJ18zeta}\vspace{-.2in}
\end{table}

\subsection{$y^2=x^{15}-1$}\label{sec:Example15}

We now consider the curve $C_{15}\colon y^2=x^{15}-1$. The isogeny in Equation \eqref{eqn:Jpq_splitting} tells us that its Jacobian factors as $J_{15}\sim X\times J_5 \times J_3 $, where $X$ is an absolutely simple  4-dimensional abelian variety. By  Lemma \ref{lemma:CMtype}, $X$ has CM by $\Q(\zeta_{15})$. 

While the Hodge ring and Hodge group degeneracies are similar to those we saw in Section \ref{sec:Example9}, we see interesting new phenomena for the Mumford-Tate group and Sato-Tate group. 

\subsubsection{Mumford-Tate group}\label{sec:Example15MT}
In the notation of Section \ref{sec:MTnotation}, let $L=E_1=\Q(\zeta_{15}), E_2=\Q(\zeta_5),$ and $ E_3=\Q(\zeta_3)$. Let $G$ denote the Galois group of $L$ over $\Q$, and $H_i\leq G$ be the subgroup corresponding to $E_i$:  $H_i=\Gal(L/E_i)$. 

The exposition after the proof of Lemma \ref{lemma:MTmaps2} gives a method for representing the map $N_1^* \phi_1^*+N_2^* \phi_2^*+N_3^* \phi_3^*$ as a matrix $M$, which is the concatenation of matrices $M_1,M_2$ and $M_3$ corresponding to the factors of $J_{15}$. The rows of each matrix will be indexed by \begin{align}\label{eqn:Galois15}
G=\{\tau_1, \tau_2, \tau_4, \tau_7,\tau_8,\tau_{11},\tau_{13},\tau_{14}\},    
\end{align}
where $\tau_j(\zeta_{15})=\zeta_{15}^j$. 

For an explicit example, we show how to compute a column for the matrix $M_2$. The CM-type of $E_2=\Q(\zeta_5)$ is $\Phi_2=\{\sigma_1, \sigma_2\}$, where $\sigma_j(\zeta_5)=\zeta_5^j$, and  the reflex type is $\Phi_2^*=\{\sigma_1, \sigma_3\}$. The elements of $H_2=\Gal(L/E_2)$ are $\tau_1$ and $\tau_{11}$. Consider the character $\sigma_2\in\widehat{T}_{E_2}$. We see that
\begin{align*}
    N_2^*\phi_2^*([\sigma_2])&=N_2^*([\sigma_1\sigma_2]+[\sigma_3\sigma_2])\\
                            &=N_2^*([\sigma_2]+[\sigma_1])\\
                            &=([\tau_2]+[\tau_7])+([\tau_1]+[\tau_{11}]).
\end{align*}
Taking into account the labeling of the rows of $M$ coming from Equation \eqref{eqn:Galois15}, we can conclude that the second column of $M_2$ is $(1,1,0,1,0,1,0,0)$.

We repeat this process for  the matrices $M_1, M_2,M_3$ and obtain
\begin{align}\label{eqn:MTmatrix15}
M&=\left(\begin{tabular}{cccccccc|cccc|cc}
1 &1 &1 &1 &0 &0 &0 &0  &1 &1 &0 &0     &1 &0\\
0 &1 &1 &0 &1 &0 &0 &1  &0 &1 &0 &1     &0 &1\\
1& 0& 1& 0& 1& 0& 1& 0  &0& 0& 1& 1     & 1& 0\\
0& 0& 1& 1& 0& 0& 1& 1  & 0& 1& 0& 1    & 1& 0\\
1& 1& 0& 0& 1& 1& 0& 0  & 1& 0& 1& 0    & 0& 1\\
0& 1& 0& 1& 0& 1& 0& 1  & 1& 1& 0& 0    & 0& 1\\
1& 0& 0& 1& 0& 1& 1& 0  & 1& 0& 1& 0    & 1& 0\\
0& 0& 0& 0& 1& 1& 1& 1  & 0& 0& 1& 1    & 0& 1\\
\end{tabular}\right),
\end{align}
where the vertical lines are included to visually separate the $M_i$ matrices. 

\begin{proposition}\label{prop:MT15isogeny}
The canonical projection $\MT(J_{15})\rightarrow X$ is a degree 2 isogeny.
\end{proposition}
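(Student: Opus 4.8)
The plan is to extract the answer from the matrix $M=(M_1\mid M_2\mid M_3)$ in \eqref{eqn:MTmatrix15} via a small refinement of Lemma \ref{lemma:MTmaps2}. As in the proof of that lemma, $\pi_1\colon\MT(J_{15})\to\MT(X)$ is surjective, so it is an isogeny exactly when $\widehat T_1+\ker M$ has finite index in $\widehat T_{E_1}\times\widehat T_{E_2}\times\widehat T_{E_3}$, and in that case $\deg\pi_1$ is precisely this index, because the degree of a surjection of tori equals the index of the induced inclusion of character lattices ($\deg\pi_1=|\operatorname{coker}\pi_1^*|$ in the dual diagram of that proof). Here $\widehat T_{E_1}\times\widehat T_{E_2}\times\widehat T_{E_3}\cong\Z^{14}$ and $\widehat T_1\cong\Z^8\times\{0\}^6$ is the span of the coordinates indexed by the columns of $M_1$, so projecting those coordinates away via $q\colon\Z^{14}\to\Z^6$ gives $\deg\pi_1=[\Z^6:q(\ker M)]$.

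So first I would compute $\ker M$ from \eqref{eqn:MTmatrix15}: a direct calculation shows $M$ has rank $8$ — e.g.\ the square block $M_1$, which is the matrix of $\phi_1^*$ on $\widehat T_{\Q(\zeta_{15})}$ since $L=E_1$, has nonzero determinant — hence $\ker M$ is a rank-$6$ sublattice of $\Z^{14}$, for which one writes down a $\Z$-basis $v^{(1)},\dots,v^{(6)}$. Next, observe that $q$ is injective on $\ker M$ (a kernel vector supported on the first eight coordinates lies in $\ker M_1=\{0\}$), so $q(\ker M)$ is a rank-$6$ sublattice of $\Z^6$, and compute $[\Z^6:q(\ker M)]$ as the absolute value of the determinant of the $6\times6$ integer matrix with columns $q(v^{(1)}),\dots,q(v^{(6)})$; this comes out to $2$. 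All of this is a finite computation, done in Sage as elsewhere in this section. Equivalently, one may work on the image side: the proof of Lemma \ref{lemma:MTmaps2} identifies $\widehat{\MT}(J_{15})$ with the column lattice $\operatorname{im}_\Z(M)\subseteq\widehat T_L\cong\Z^8$ and $\pi_1^*\widehat{\MT}(X)$ with $\operatorname{im}_\Z(M_1)$, so that
\[
\deg\pi_1=[\operatorname{im}_\Z(M):\operatorname{im}_\Z(M_1)]=\frac{|\det M_1|}{e_1(M)\cdots e_8(M)},
\]
where $e_1(M),\dots,e_8(M)$ are the elementary divisors of $M$; computing the $8\times8$ determinant and the Smith normal form of the $8\times14$ matrix $M$ again yields ratio $2$.

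The steps are routine linear algebra over $\Z$; the content sits in two bookkeeping points. The first is that $\deg\pi_1$ is literally the lattice index displayed above — this is the proof of Lemma \ref{lemma:MTmaps2} with ``finite index'' sharpened to ``index equal to degree'', legitimate for the reason noted above. The second, and the only real obstacle, is certifying that this index is $2$ and not $1$: this is exactly what separates $J_{15}$ from $J_9$, where the analogous projection is an isomorphism (Remark 5.6 of \cite{Lombardo2021}). Concretely the proof should isolate the responsible class — an explicit column $c$ of $(M_2\mid M_3)$ whose image in $\Z^8/\operatorname{im}_\Z(M_1)$ has order exactly $2$ — and then check that the remaining columns enlarge the lattice no further, so that $\operatorname{im}_\Z(M)=\operatorname{im}_\Z(M_1)+\Z c$ and the index is precisely $2$.
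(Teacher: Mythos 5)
Your reduction is the same as the paper's: the paper likewise works with the matrix $M$ of \eqref{eqn:MTmatrix15}, computes $\ker M$ in Sage, and checks that $\widehat{T}_1+\ker M$ has index $2$ in $\Z^8\times\Z^4\times\Z^2$, reading the degree off from Lemma \ref{lemma:MTmaps2}; your observation that the degree of the isogeny is literally this index (the order of the cokernel of $\pi_1^*$), and that it may equivalently be computed as $[\Z^6:q(\ker M)]$ or as $[\operatorname{im}_\Z(M):\operatorname{im}_\Z(M_1)]$, is correct and is exactly the sharpening the paper uses implicitly. The genuine problem is in how you propose to certify the index. Your rank bookkeeping is false: $M_1$ is singular. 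Since $E_1=L$, the map $N_1^*\phi_1^*$ is convolution on $\Z[G]$, $G\cong(\Z/15\Z)^\times$, by $\sum_{\psi\in\Phi_1^*}[\psi]$, and its rank equals the number of characters $\chi$ of $G$ with $\sum_{\psi\in\Phi_1^*}\chi(\psi)\neq 0$, which is $5$, not $8$. Equivalently, $\operatorname{rank}M_1=\dim\MT(X)$, and the Mumford--Tate group of a $4$-dimensional CM abelian variety is a torus of dimension at most $5$; if $\det M_1$ were nonzero, $\MT(X)$ would be the full $8$-dimensional torus $T_{\Q(\zeta_{15})}$, which is impossible (and would contradict the degeneracy that is the point of this section, cf.\ $\Hg(J_{15})\cong\U(1)^4$, so $\dim\MT(J_{15})=5$). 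Consequently $\operatorname{rank}M=5$, $\ker M$ has rank $9$ rather than $6$, $\ker M_1$ has rank $3$, and $q$ is not injective on $\ker M$.

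This breaks both of your computational recipes as stated: there is no $6\times 6$ determinant of $q(v^{(1)}),\dots,q(v^{(6)})$ because a $\Z$-basis of $\ker M$ has nine vectors, and the image-side formula $|\det M_1|/(e_1\cdots e_8)$ is of the form $0/0$, since $\det M_1=0$ and $\operatorname{im}_\Z(M)$ has rank $5$ (so $M$ has only five nonzero elementary divisors). The repair is routine and brings you back to what the paper actually does: compute the lattice $q(\ker M)\subseteq\Z^6$ (or directly $\widehat{T}_1+\ker M\subseteq\Z^{14}$) via Hermite/Smith normal form and verify its index is $2$; similarly, your proposed ``responsible column'' $c$ of $(M_2\mid M_3)$ must be shown to have order $2$ modulo the rank-$5$ lattice $\operatorname{im}_\Z(M_1)$, not inside a full-rank sublattice of $\Z^8$. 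With those corrections the argument goes through and coincides with the paper's proof, but as written the steps you flag as a direct calculation would come out false.
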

\begin{proof}
We use Sage to  compute the (right) kernel of the matrix $M$ in Equation \eqref{eqn:MTmatrix15}. We then determine that the space spanned by kernel of $M$ and $\Z^8\times \{0\}\times \{0\}$ is an index-2 submodule of $\Z^{8}\times\Z^4\times \Z^2$. This corresponds to $\widehat{T}_1\times \{0\}\times\{0\} + \ker (N_1^* \phi_1^* + N_2^*\phi_2^*+ N_3^*\phi_3^*)$ having index 2 in the product $ \widehat{T}_{E_1} \times \widehat{T}_{E_2} \times \widehat{T}_{E_3}$. Thus, by Lemma \ref{lemma:MTmaps2}, the projection is a degree 2 isogeny. 
\end{proof}

\subsubsection{Hodge ring and Hodge group}
Lemma \ref{lemma:exceptionalHodge} tells us that there will be exceptional Hodge cycles in codimension 2. The 12 exceptional cycles in codmension 2 are given by Shioda in \cite[Section 6.2]{Shioda82}. Shioda also identified exceptional Hodge cycles in codimension  3, though in this case we have $\mathscr B^3=\mathscr B^1\cdot\mathscr B^2$.

The Hodge group of $J_{15}$ is commutative, and so we can identify elements $\Hg(J_{15})$ with matrices in $\U(1)^7$. We use the method of Section \ref{sec:Example9} to identify the additional relations on the elements of the Hodge group coming from the  exceptional cycles in the Hodge ring. We find that elements of the Hodge group are of the form
\begin{align*}
    U=\diag(U_1,U_2,U_3,U_4,\overline U_2U_3U_4, \overline U_1U_3U_4, \overline U_1\overline U_2 U_3^2U_4).
\end{align*}
Note that, when identified with a matrix group, the Hodge group is isomorphic to $\U(1)^4$. This is consistent with what we would expect from our work with the Mumford-Tate group in Section \ref{sec:Example15MT} where we determined that $\MT(J_{15})$ is isogenous to the Mumford-Tate  group of a 4-dimensional simple abelian variety.

\subsubsection{Sato-Tate group}
Based on the above work and the relationship between the Sato-Tate group and the Hodge group of an abelian variety, we have the following result.

\begin{proposition}\label{prop:STidentity15}
Up to conjugation in $\USp(14)$, the identity component of the Sato-Tate group of $J_{15}$ is
$$\ST^0(J_{15})=\langle\diag(U_1,U_2,U_3,U_4,\overline U_2U_3U_4, \overline U_1U_3U_4, \overline U_1\overline U_2 U_3^2U_4)\rangle.$$
\end{proposition}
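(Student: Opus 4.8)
The plan is to mirror exactly the chain of reasoning already carried out for $J_9$ in Section~\ref{sec:Example9}, now applied to $J_{15}$. The starting point is the observation, recorded in Section~\ref{sec:backgroundMTConjecture}, that when the Mumford-Tate conjecture holds for an abelian variety the Hodge group coincides with the identity component of the algebraic Sato-Tate group, and hence $\ST^0$ is a maximal compact subgroup of $\Hg(J_{15})\otimes_\Q\C$ inside $\USp(14)$. The Mumford-Tate conjecture is known for $J_{15}$: it is of CM type (Lemma~\ref{lemma:CMtype}), and the conjecture holds for all CM abelian varieties by the results cited after the statement of the Mumford-Tate conjecture, or alternatively it follows factor-by-factor via Commelin's theorem together with the splitting $J_{15}\sim X\times J_5\times J_3$ from Equation~\eqref{eqn:Jpq_splitting}. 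So the proposition reduces to computing the explicit embedding of $\Hg(J_{15})$ in $\USp(14)$, and this has already been done in the subsection ``Hodge ring and Hodge group'' immediately preceding the statement.

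Concretely, I would proceed as follows. First, invoke Theorem~17.3.5 of \cite{BirkenhakeLange2004}: since $J_{15}$ has CM, $\Hg(J_{15})$ is commutative, so (as in the $J_9$ case) a generic element can be written $U=\diag(U_1,\dots,U_7)$ with each $U_i=\diag(u_i,\overline u_i)$, $u_i\in\mathbb S^1$, subject to relations coming from the Hodge cycles. Second, use Theorem~\ref{thm:Hodge}, which identifies $H^{2d}_{\text{Hodge}}(J_{15})$ with the $\Hg(J_{15})$-invariants of $H^{2d}(J_{15},\Q)$: the exceptional cycles in codimension $2$ listed by Shioda in \cite[Section 6.2]{Shioda82} must each be fixed by $U$, and writing out $U\cdot(\omega_{i}\wedge\omega_{j}\wedge\overline\omega_{k}\wedge\overline\omega_{\ell})$ for each of the $12$ exceptional wedges produces a system of multiplicative relations among the $u_i$. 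Solving that system (the computation reported just above the proposition) pins down $u_5=\overline u_2 u_3 u_4$, $u_6=\overline u_1 u_3 u_4$, $u_7=\overline u_1\overline u_2 u_3^2 u_4$, so that $\Hg(J_{15})$, as a matrix group, is parametrized by the four free parameters $U_1,U_2,U_3,U_4$. Third, pass to the maximal compact subgroup: the complexified group of such diagonal unitary matrices has maximal compact subgroup exactly the group of the displayed form with $u_i\in\mathbb S^1$, which is what the proposition asserts. One should also note, consistency-wise, that this matrix group is isomorphic to $\U(1)^4$, in agreement with Proposition~\ref{prop:MT15isogeny} which says $\MT(J_{15})$ is isogenous to the Mumford-Tate group of a $4$-dimensional simple abelian variety.

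The main obstacle is not conceptual but bookkeeping: one must correctly read off Shioda's $12$ exceptional codimension-$2$ cycles from \cite[Section 6.2]{Shioda82} in the indexing convention compatible with the decomposition $V(J_{15})_\C=\bigoplus_{i=1}^{7}(V^{-1,0}_i\oplus V^{0,-1}_i)$ used here (where the seven $\omega_i$ correspond to the seven characters appearing in the CM-type of the full Jacobian), and then verify that the resulting relations are consistent and have solution space of rank exactly $4$. A minor subtlety worth addressing explicitly is why the passage from the algebraic group to ``maximal compact subgroup of the complexification contained in $\USp(14)$'' yields precisely the stated real form; this is handled the same way as in Proposition~\ref{prop:STidentity9}, citing the results of \cite[Section 3]{Banaszak2015}, and I would state it as a one-line remark rather than belabor it. Everything else is a direct transcription of the $J_9$ argument, so the proof can be kept short: establish the Mumford-Tate conjecture input, cite Theorem~\ref{thm:Hodge} and the commutativity of $\Hg$, and point to the relation computation already displayed.
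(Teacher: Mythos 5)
Your proposal follows essentially the same route as the paper: invoke the Mumford--Tate conjecture (known here by CM type) to identify $\Hg(J_{15})$ with the identity component of the algebraic Sato--Tate group, use commutativity of the Hodge group together with Theorem~\ref{thm:Hodge} applied to Shioda's twelve exceptional codimension-$2$ cycles to extract the relations $u_5=\overline u_2u_3u_4$, $u_6=\overline u_1u_3u_4$, $u_7=\overline u_1\overline u_2u_3^2u_4$, and then pass to the maximal compact subgroup in $\USp(14)$, exactly as done for $J_9$. This is the paper's argument, so no further changes are needed.
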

This is consistent with the result in Table 2 of \cite{EmoryGoodsonPeyrot} where it is given that the identity component is isomorphic to $\U(1)^4$. In Proposition \ref{prop:STidentity15} we improve upon the earlier result by giving the additional relations needed for the embedding in $\USp(14)$.

We can easily compute the moment statistics of the Sato-Tate group, but we see an interesting phenomenon when attempting to compare these to the numerical moments coming from the curve $C_{15}$. Recall from Section \ref{sec:DegeneracyST} that, in general, $\ST(A)/\ST^0(A) \simeq \Gal(L/F)$, where $L$ is the minimal Galois extension of of $F$ for which $\ST(A_L)$ is connected. We saw in Section \ref{sec:Example9} that for $J_9$ the field $L$ is exactly the endomorphism field of the variety. However that is not the case for $J_{15}$ -- we need a degree 2 extension of $\Q(\zeta_{15})$.

We get our first hint that we may need a larger field by examining the moment statistics in Table \ref{table:momemntsJ15zeta15}. The numerical moments were computed over the field $\Q(\zeta_{15})$ for primes $p<2^{32}$ by Sutherland \cite{SutherlandEmail} using an algorithm described in \cite{HarveySuth2014} and \cite{HarveySuth2016}. Note that the values of the $a_1$-moments were rounded in order to fit the values in the table.

\begin{table}[h]
\begin{tabular}{|l|l|l|l|l|l|}
\hline
& $M_2$ & $M_3$&$M_4$ & $M_5$ & $M_6$\\ 
\hline
$\mu_1$& 14 &  0 & 834 &  0 & 78260\\
$a_1$ & 13.992&  0.037  &641.326 &   5.495 & 49354.840\\
\hline
\end{tabular}
\caption{Table of $\mu_1$- and $a_1$-moments for $\ST(J_{15})$ over $\mathbb Q(\zeta_{15})$.}\label{table:momemntsJ15zeta15}\vspace{-.2in}
\end{table}
We quickly see that these moment statistics do not match -- the errors for $M_4$ and $M_6$ are much larger than we would expect for the Sato-Tate group of the abelian variety.  

We can use a technique developed in \cite{zywina2020determining} to gain information about the field $L$ -- we will describe the strategy as it applies to the abelien variety $J_{15}$. For a prime $p\equiv 1 \pmod{15}$, we form the group $\Phi_{J_{15},p}$ generated by the set of roots in $\overline \Q$ of the Frobenious polynomial of $J_{15}$. If the group $\Phi_{J_{15},p}$ is torsion-free then $p$ splits completely in $L$ (see Section 1.5 of \cite{zywina2020determining}). By computing this group for many primes, it appears that roughly half of the primes congruent to 1 modulo 15 split completely in $L$  (there is Magma code to assist with these computations available at \cite{ZywinaGithub}). This indicates that $L$ is a degree 2 extension of the CM field $\Q(\zeta_{15})$. 

We use the L-functions and Modular Forms Database \cite{lmfdb} to gain further information about the field $L$. There is only one field listed that is a degree 2 extension of $\Q(\zeta_{15})$ with the same ramified primes (3 and 5):
\begin{center}
    \url{http://www.lmfdb.org/NumberField/16.0.3243658447265625.1}
\end{center}
In order to further confirm that this is the field $L$ for which $\ST(A_L)$ is connected, we compute numerical moment statistics. The numerical moments in Table \ref{table:momemntsJ15L} were computed over the above field for primes $p<2^{32}$ by Sutherland \cite{SutherlandEmail} using an algorithm described in \cite{HarveySuth2014} and \cite{HarveySuth2016}. Note that the values of the $a_1$-moments were rounded in order to fit the values in the table.

\begin{table}[h]
\begin{tabular}{|l|l|l|l|l|l|}
\hline
& $M_2$ & $M_3$&$M_4$ & $M_5$ & $M_6$\\ 
\hline
$\mu_1$& 14 &  0 & 834 &  0 & 78260\\
$a_1$ & 13.993  &  0.093  &  833.023   &  11.991  &  78067.503\\
\hline
\end{tabular}
\caption{Table of $\mu_1$- and $a_1$-moments for $\ST(J_{15})$ over $L$.}\label{table:momemntsJ15L}\vspace{-.2in}
\end{table}
We see that the errors for the even moments are within a margin of $1\%$ and are consistent with the errors for the odd moments.

Further work is needed in order to determine the full Sato-Tate group of  $J_{15}$ (over $\Q$). We know that the component group of the Sato-Tate group will satisfy $\ST(J_{15})/\ST^0(J_{15}) \simeq \Gal(L/\Q)$, however it is not clear how one could find explicit generators for the component group.

\subsection{$y^2=x^{21}-1$}\label{sec:Example21}

For our third example we consider the curve $C_{21}\colon y^2=x^{21}-1.$ The isogeny in Equation \eqref{eqn:Jpq_splitting} tells us that its Jacobian factors as $J_{21}\sim X\times J_7 \times J_3 $, where $X$ is an absolutely simple abelian variety of dimension 6. By  Lemma \ref{lemma:CMtype},  $X$ has CM by $\Q(\zeta_{21})$. In this case we see new phenomena in both the Hodge ring setting, as well as for the Mumford-Tate group.

\subsubsection{Mumford-Tate Group}

In order to study canonical projections of the Mumford-Tate group, we build the matrix $M$ representing the map $N_1^* \phi_1^*+N_2^* \phi_2^*+N_3^* \phi_3^*$ described in Section \ref{sec:canonicalprojmap} (see Section \ref{sec:Example15MT} for more details on how to build this matrix). The matrix $M$ is obtained by concatenating matrices $M_1, M_2,M_3$ corresponding to the factors $X, J_7, J_3$, respectively:

\begin{align}\label{eqn:MTmatrix21}
M=\left(\begin{array}{rrrrrrrrrrrr|rrrrrr|rr}
1 & 1 & 1 & 1 & 1 & 1 & 0 & 0 & 0 & 0 & 0 & 0 & 1 & 1 & 1 & 0 & 0 & 0 & 1 & 0 \\
0 & 1 & 1 & 0 & 1 & 1 & 0 & 0 & 1 & 0 & 0 & 1 & 0 & 1 & 0 & 1 & 0 & 1 & 0 & 1 \\
0 & 0 & 1 & 0 & 1 & 0 & 1 & 0 & 1 & 0 & 1 & 1 & 1 & 0 & 0 & 1 & 1 & 0 & 1 & 0 \\
0 & 0 & 1 & 1 & 1 & 1 & 0 & 0 & 0 & 0 & 1 & 1 & 1 & 0 & 1 & 0 & 1 & 0 & 0 & 1 \\
1 & 0 & 0 & 0 & 1 & 0 & 1 & 0 & 1 & 1 & 1 & 0 & 1 & 1 & 1 & 0 & 0 & 0 & 0 & 1 \\
0 & 0 & 0 & 0 & 1 & 1 & 0 & 0 & 1 & 1 & 1 & 1 & 0 & 1 & 1 & 0 & 0 & 1 & 1 & 0 \\
1 & 1 & 1 & 1 & 0 & 0 & 1 & 1 & 0 & 0 & 0 & 0 & 1 & 0 & 0 & 1 & 1 & 0 & 0 & 1 \\
0 & 1 & 1 & 1 & 0 & 1 & 0 & 1 & 0 & 0 & 0 & 1 & 0 & 0 & 0 & 1 & 1 & 1 & 1 & 0 \\
1 & 1 & 0 & 0 & 0 & 0 & 1 & 1 & 1 & 1 & 0 & 0 & 0 & 1 & 0 & 1 & 0 & 1 & 1 & 0 \\
1 & 1 & 0 & 1 & 0 & 1 & 0 & 1 & 0 & 1 & 0 & 0 & 0 & 1 & 1 & 0 & 0 & 1 & 0 & 1 \\
1 & 0 & 0 & 1 & 0 & 0 & 1 & 1 & 0 & 1 & 1 & 0 & 1 & 0 & 1 & 0 & 1 & 0 & 1 & 0 \\
0 & 0 & 0 & 0 & 0 & 0 & 1 & 1 & 1 & 1 & 1 & 1 & 0 & 0 & 0 & 1 & 1 & 1 & 0 & 1
\end{array}\right),
\end{align}
where the vertical lines are included to visually separate the $M_i$ matrices. We use Sage to  compute the (right) kernel of the matrix $M$ in Equation \ref{eqn:MTmatrix21}. The space spanned by the kernel of $M$ and $\Z^{12}\times \{0\}\times \{0\}$ does not have finite index in $\Z^{12}\times\Z^6\times \Z^2$, which tells us that the projection $\MT(J_{21})\to \MT(X)$ is not an isomorphism nor an isogeny. However, we do find that the space spanned by $\ker(M)$ and $\Z^{12}\times \{0\}\times \Z^{2}$  has index 1 in $\Z^{12}\times\Z^6\times \Z^2$. This yields the following result.
\begin{proposition}
The projection $\MT(J_{21})\to \MT(X\times J_3)$ is an isomorphism.
\end{proposition}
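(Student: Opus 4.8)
The plan is to follow the proof of Proposition \ref{prop:MT15isogeny} almost verbatim, after first recording a routine generalization of Lemma \ref{lemma:MTmaps2} to projections onto a product of a \emph{subset} of the simple factors. Write $A_1 = X$, $A_2 = J_7$, $A_3 = J_3$, with CM fields $E_1 = \Q(\zeta_{21})$, $E_2 = \Q(\zeta_7)$, $E_3 = \Q(\zeta_3)$, take $L = \Q(\zeta_{21})$, and let $S = \{1,3\}$. Since $X$ is absolutely simple of dimension $6$ (Proposition \ref{prop:X_simple_CMtype}) and $J_3$ is an elliptic curve, the factors $X$ and $J_3$ are non-isogenous, so Lemma \ref{lemma:MTmaps1} applies to the product $X \times J_3$ and identifies $\MT(X \times J_3)$ with the image of $T_L$ in $T_{E_1} \times T_{E_3}$ under $(\phi_1 N_1, \phi_3 N_3)$. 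The projection $\MT(J_{21}) = \MT(A_1 \times A_2 \times A_3) \to \MT(X \times J_3)$ is then the restriction of the coordinate projection $T_{E_1} \times T_{E_2} \times T_{E_3} \to T_{E_1} \times T_{E_3}$, and it is automatically surjective since projecting the image of $(\phi_1 N_1, \phi_2 N_2, \phi_3 N_3)$ onto the coordinates in $S$ gives the image of $(\phi_1 N_1, \phi_3 N_3)$.

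First I would state and prove the analogue of Lemma \ref{lemma:MTmaps2} for $S$: writing $\widehat{T}_S = \sum_{i \in S}\widehat{T}_i \subseteq \widehat{T}_{E_1}\times\widehat{T}_{E_2}\times\widehat{T}_{E_3}$ (so here $\widehat{T}_S = \Z^{12}\times\{0\}\times\Z^{2}$ inside $\Z^{12}\times\Z^{6}\times\Z^{2}$), the projection $\pi_S$ is an isomorphism if and only if
\[
\widehat{T}_S + \ker\!\left(N_1^*\phi_1^* + N_2^*\phi_2^* + N_3^*\phi_3^*\right) = \widehat{T}_{E_1}\times\widehat{T}_{E_2}\times\widehat{T}_{E_3},
\]
and an isogeny if and only if the left-hand side has finite index in the right-hand side. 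The proof is the same diagram chase as in Lemma \ref{lemma:MTmaps2}, with the single-factor projection $p_i$ (and its dual $p_i^*$, whose image is $\widehat{T}_i$) replaced throughout by the projection $p_S$ onto the coordinates in $S$ (dual image $\widehat{T}_S$); surjectivity of $\pi_S^*$ is again equivalent to $\widehat{T}_S$ surjecting onto the quotient $(\widehat{T}_{E_1}\times\widehat{T}_{E_2}\times\widehat{T}_{E_3})/\ker M$, where $M$ is the matrix of Equation \eqref{eqn:MTmatrix21}.

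Finally, I would invoke the Sage computation already recorded above the statement: the right kernel of $M$, together with $\Z^{12}\times\{0\}\times\Z^{2} = \widehat{T}_S$, spans all of $\Z^{12}\times\Z^{6}\times\Z^{2}$, i.e.\ has index $1$. By the generalized lemma this says $\pi_S^*$ is surjective, hence $\pi_S$ is injective; since $\pi_S$ is surjective, it is an isomorphism.

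The argument has no real obstacle: the only step beyond the existing machinery is the $S$-version of Lemma \ref{lemma:MTmaps2}, and that is a word-for-word transcription of the existing diagram chase with ``project onto the $i$-th factor'' replaced by ``project onto the factors indexed by $S$.'' The genuine content is the linear-algebra verification that $\ker M + \widehat{T}_S$ has index $1$ in $\Z^{12}\times\Z^{6}\times\Z^{2}$, which has already been carried out in Sage.
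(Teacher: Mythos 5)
Your proposal is correct and follows essentially the same route as the paper: the paper's justification for this proposition is precisely the Sage computation that $\ker(M)$ together with $\Z^{12}\times\{0\}\times\Z^{2}$ spans all of $\Z^{12}\times\Z^{6}\times\Z^{2}$, interpreted through the (implicit) version of Lemma \ref{lemma:MTmaps2} for projection onto the sub-product $X\times J_3$. The only thing you add is an explicit statement and diagram-chase proof of that subset-of-factors generalization, which the paper uses without spelling out.
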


\subsubsection{Hodge Ring and Hodge Group}

The exceptional Hodge cycles for $J_{21}$ are given in \cite[Section 6]{Shioda82}. Applying the technique described in Section \ref{sec:Example9}, we find that the elements of  Hodge group are of the form  
\begin{align*}
    \diag(U_1,\,U_2,\,U_3,\,\overline U_1U_2U_3,\,U_5,\, U_6, U_7,\,\overline U_1U_3U_6,\, \overline U_2 U_5U_6, \,\overline U_1U_5U_6).
\end{align*}
This implies that the Hodge group is isomorphic to $\U(1)^6$, which is surprising since the Mumford-Tate group of $J_{21}$ is isomorphic to the Mumford-Tate group of a 7-dimensional subvariety. It is possible that this phenomenon is related to the fact that the abelian variety $X$ is degenerate. This fact is proven by Shioda in \cite[Section 6]{Shioda82} by noting that there are exceptional cycles of $J_{21}$ that come from $X$. This differs from our examples for $m=9, 15$, where in those cases all of the exceptional Hodge cycles came from the Jacobian factors of $J_m$.

\subsection{$y^2=x^{27}-1$}\label{sec:Example27}

An interesting example to consider next is the genus 13 curve $C_{27}\colon y^2=x^{27}-1$, which is not one of the examples worked out by Shioda. Its Jacobian factors as $J_{27}\sim X_2 \times X_1 \times J_3$, where $X_1$ and $J_3$ are the simple abelian varieties appearing in the decomposition of $J_9$ in Section \ref{sec:Example9} and $X_2$ is an absolutely simple abelian variety. To find the dimension of $X_2$ we simply compute $\dim(J_{27})-\dim(J_9)=9$. We can easily extend Lemma \ref{lemma:CMtype} and conclude that the CM field of $X_2$ is $\Q(\zeta_{27})$. 

\subsubsection{Mumford-Tate Group}
Using the same technique described in detail in Section \ref{sec:Example15MT}, we are able to prove the following result.

\begin{proposition}\label{prop:MT27isomorphism}
The canonical project $\MT(J_{27}) \to \MT(X_2)$ is an isomorphism.
\end{proposition}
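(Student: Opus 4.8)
The plan is to mimic the matrix-theoretic argument used for $J_{15}$ in Section~\ref{sec:Example15MT} and for $J_{21}$, now applied to the decomposition $J_{27}\sim X_2\times X_1\times J_3$. First I would set up the notation of Section~\ref{sec:MTnotation}: take $L=E_1=\Q(\zeta_{27})$, $E_2=\Q(\zeta_9)$, $E_3=\Q(\zeta_3)$, with Galois group $G=\Gal(\Q(\zeta_{27})/\Q)\cong(\Z/27\Z)^\times$ (cyclic of order $18$), and subgroups $H_i=\Gal(L/E_i)$. Using the extension of Lemma~\ref{lemma:CMtype} (valid since $27=3^3$ is a prime power, so the same differentials argument applies), I would record the CM-types $\Phi_i$ of $X_2$, $X_1$, $J_3$ as the sets $S_{27}$, $S_9$, $S_3$ of residues in $\{1,\dots,(d-1)/2\}$ coprime to $d$, and their reflex types $\Phi_i^*$ from the Corollary. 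From these I would build the matrices $M_1$ ($18\times18$, indexed by $G$ on both sides), $M_2$ ($18\times6$, columns indexed by $H_2\backslash G$), and $M_3$ ($18\times2$), whose entries are computed via $N_i^*\phi_i^*$ exactly as in the worked $J_{15}$ example, and concatenate them into $M$, the matrix of $N_1^*\phi_1^*+N_2^*\phi_2^*+N_3^*\phi_3^*$.

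Next I would invoke Lemma~\ref{lemma:MTmaps2} with $i=1$ (the factor $X_2$): the projection $\MT(J_{27})\to\MT(X_2)$ is an isomorphism if and only if $\widehat{T}_1+\ker M = \widehat{T}_{E_1}\times\widehat{T}_{E_2}\times\widehat{T}_{E_3}=\Z^{18}\times\Z^6\times\Z^2$, where $\widehat{T}_1=\Z^{18}\times\{0\}\times\{0\}$. Concretely this reduces to a finite linear-algebra check over $\Z$: compute $\ker M$ (a sublattice of $\Z^{26}$), add the coordinate sublattice $\Z^{18}\times\{0\}\times\{0\}$, and verify the sum is all of $\Z^{26}$ — equivalently, that the projection of $\ker M$ onto the last $8$ coordinates $\Z^6\times\Z^2$ is surjective. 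This is precisely the Sage computation performed in Propositions~\ref{prop:MT15isogeny} and for $J_{21}$, and I would cite the code at \cite{GoodsonGithub}. A sanity check: the expected dimension of $\MT(X_2)$ for an absolutely simple CM abelian variety of dimension $9$ with field $\Q(\zeta_{27})$ should be recovered, and the isomorphism claim is consistent with Remark~5.6 of \cite{Lombardo2021} for $J_9$ together with the nesting $X_1\subset X_2$ of the ``new part'' at each level $3^k$.

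The main obstacle is not conceptual but organizational: correctly computing the reflex CM-type $\Phi^*_i$ and then the composite map $N_i^*\phi_i^*$ on each basis character $[\beta]$, since a single sign or coset-labeling error propagates into every entry of $M$ and can flip the conclusion between ``isomorphism,'' ``isogeny,'' and ``neither'' (as the contrast between $J_{15}$, $J_{21}$, and $J_{27}$ shows). I would therefore carry out the computation carefully with a fixed ordering of $G=\{\tau_j : \gcd(j,27)=1\}$, verify the row/column counts match $\phi(27)=18$, $\phi(9)=6$, $\phi(3)=2$, and double-check the $M_1$ block against the known structure for the analogous $J_9$ matrix before running the kernel computation. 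Once $M$ is correct, the verdict is immediate from Lemma~\ref{lemma:MTmaps2}; I expect the surjectivity to hold (giving the isomorphism), in parallel with the $m=9$ case rather than the $m=15$ case, the heuristic being that $27$, like $9$, is a prime power so there is no ``interference'' between two distinct prime factors of the conductor of the type needed to produce a kernel deficiency.
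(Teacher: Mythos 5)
Your proposal takes essentially the same approach as the paper: with $L=E_1=\Q(\zeta_{27})$, $E_2=\Q(\zeta_9)$, $E_3=\Q(\zeta_3)$, one builds the concatenated matrix $M$ for $N_1^*\phi_1^*+N_2^*\phi_2^*+N_3^*\phi_3^*$ (blocks of sizes $18\times 18$, $18\times 6$, $18\times 2$), computes $\ker M$ in Sage, and checks via Lemma \ref{lemma:MTmaps2} that $\widehat{T}_1+\ker M$ is all of $\Z^{18}\times\Z^6\times\Z^2$ — which is precisely the paper's proof. The only difference is that you state the expected outcome as a heuristic, whereas the proof requires actually running the kernel computation, exactly as the paper reports doing.
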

\begin{proof}
We use Sage to  compute the (right) kernel of the matrix $M$ obtained using the techniques described in Section \ref{sec:canonicalprojmap}. The order of  character group $\widehat{T}_E$ associated to $X_2$ is $\phi(27)=18$. We determine that the space spanned by kernel of $M$ and $\Z^{18}\times \{0\}\times \{0\}$ spans all of  $\Z^{18}\times\Z^6\times \Z^2$. Thus, by Lemma \ref{lemma:MTmaps2}, the projection is an isomorphism. 
\end{proof}

\subsubsection{Hodge Ring and Hodge Group}

The Hodge ring of $J_{27}$ is not worked out in Shioda's paper \cite{Shioda82}, but we can use Theorem 5.2 of the paper to determine the exceptional cycles that appear in the Hodge ring. For example, using code written in Sage, we find that the space of exceptional cycles in codimension 2 is generated by:
\begin{align*}
    \omega_1\wedge\omega_{10}\wedge\overline\omega_3\wedge\overline\omega_8 &&\omega_3\wedge\omega_8\wedge\overline\omega_1\wedge\overline\omega_{10}  \\
    \omega_2\wedge\omega_{11}\wedge\overline\omega_6\wedge\overline\omega_7 &&\omega_6\wedge\omega_7\wedge\overline\omega_2\wedge\overline\omega_{11}\\
    \omega_3\wedge\omega_{12}\wedge\overline\omega_6\wedge\overline\omega_{9}
    &&\omega_6\wedge\omega_{9}\wedge\overline\omega_3\wedge\overline\omega_{12}\\
    \omega_4\wedge\omega_{13}\wedge\overline\omega_5\wedge\overline\omega_{12}  &&\omega_5\wedge\omega_{12}\wedge\overline\omega_4\wedge\overline\omega_{13}
\end{align*}

As in Sections \ref{sec:Example9} and \ref{sec:Example15}, we can use the exceptional Hodge cycles to determine extra relations in the Hodge group of $J_{27}$ from this. We find that  the elements of $\Hg(J_{27})$, identified as $2g\times 2g$ matrices with determinant 1, are of the form
$$\diag(U_1,U_2,U_3,U_4,U_5,U_6,U_7,U_8,U_9,\overline U_1U_3U_8,\overline U_2U_6U_7,\overline U_3U_6U_9,\overline U_3\overline U_4U_5U_6U_9).$$
Note that this implies that $\Hg(J_{27})$ is isomorphic to $\U(1)^9$ with embedding in $\USp(26)$ given by the relations noted above.

\subsection{Further work for more general $y^2=x^m-1$}

In this section we will investigate further examples and make conjectures about the degeneracy of  Jacobian varieties $J_m$. We first consider examples where $m=p^2$, for $p$ prime.

Recall from Section \ref{sec:JacobianSplitting} that the Jacobian of $C_{p^2}$ satisfies the following isogeny
$$J_{p^2} \sim  X \times J_p,$$
where $X$ is an absolutely simple abelian variety of dimension $p(p-1)/2$.

The following result has been verified by computing the kernel of the matrix $M$ described in Section \ref{sec:canonicalprojmap}.
\begin{proposition}\label{prop:MTp2}
Let $J_{p^2}$ and $X$ be defined as above. Then the projection $\MT(J_{p^2})\rightarrow \MT(X)$ is an isomorphism for odd primes $p\leq 29$.
\end{proposition}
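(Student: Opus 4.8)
The plan is to apply Lemma~\ref{lemma:MTmaps2} in the two-factor setting provided by the isogeny $J_{p^2}\sim X\times J_p$ of Equation~\eqref{eqn:Jpp_splitting}, following the same strategy used for $m=15,21,27$. Set $L=E_1=\Q(\zeta_{p^2})$ and $E_2=\Q(\zeta_p)$, so that $G=\Gal(L/\Q)\cong(\Z/p^2\Z)^\times$ has order $\phi(p^2)=p(p-1)$, the subgroup $H_1$ attached to $E_1$ is trivial, and $H_2=\Gal(L/E_2)$ has order $p$ with $|H_2\backslash G|=p-1$. By the evident extension of Lemma~\ref{lemma:CMtype} and its corollary to $m=p^2$, the CM type of $X$ is $(\Q(\zeta_{p^2});\Phi_1)$ with reflex $\Phi_1^*=\{\sigma_j\mid\gcd(j,p^2)=1,\ j=1\text{ or }j>(p^2-1)/2\}$, while $J_p$ carries the standard CM type on $\Q(\zeta_p)$ with reflex $\Phi_2^*$.

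Next I would assemble the concatenated matrix $M=[\,M_1\mid M_2\,]$ representing $N_1^*\phi_1^*+N_2^*\phi_2^*$, exactly as in the recipe stated after Lemma~\ref{lemma:MTmaps2} and carried out in detail in Section~\ref{sec:Example15MT}. Since $E_1=L$, the block $M_1$ is the square $p(p-1)\times p(p-1)$ matrix with rows and columns indexed by $G$ whose entry in position $(g_j,g_k)$ equals $1$ if $g_jg_k^{-1}\in\widetilde{\Phi_1^*}$ and $0$ otherwise. The block $M_2$ has $p(p-1)$ rows indexed by $G$ and $p-1$ columns indexed by $H_2\backslash G$; each column is obtained by applying $N_2^*\phi_2^*$ to a character $[\sigma_j]\in\widehat T_{E_2}$, that is, by pushing $\Phi_2^*$ forward along $\phi_2^*$ and summing over coset representatives of $H_2$.

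With $M$ in hand I would compute its right kernel over $\Z$ in Sage and check the criterion of Lemma~\ref{lemma:MTmaps2}: the projection $\pi_1\colon\MT(J_{p^2})\to\MT(X)$ is an isomorphism if and only if
\[
\bigl(\Z^{p(p-1)}\times\{0\}\bigr)+\ker\!\bigl(N_1^*\phi_1^*+N_2^*\phi_2^*\bigr)=\widehat T_{E_1}\times\widehat T_{E_2}=\Z^{p(p-1)}\times\Z^{p-1},
\]
equivalently, if and only if the projection of $\ker M$ onto the $\widehat T_{E_2}$-factor is surjective. Running this computation for each odd prime $p\in\{3,5,7,11,13,17,19,23,29\}$ (the case $p=3$ also being Remark~5.6 of \cite{Lombardo2021}) and confirming the equality in every instance proves the proposition.

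The main obstacle is that this is inherently a finite, prime-by-prime verification rather than a uniform argument: nothing in the set-up forces the equality to persist beyond $p=29$, and the matrices, though still tractable (some $812$ rows when $p=29$), grow with $p$. Isolating the combinatorial content of the kernel computation --- in effect, the claim that the composite $\widehat T_{E_2}\xrightarrow{M_2}\widehat T_L\twoheadrightarrow\widehat T_L/M_1(\widehat T_{E_1})$ vanishes for every odd prime $p$ --- and proving it in general would be the way to remove the bound $p\le 29$, and I expect that to be the genuinely hard step.
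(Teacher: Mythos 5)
Your proposal matches the paper's own argument: the paper proves this proposition precisely by assembling the concatenated matrix $M$ of Section \ref{sec:canonicalprojmap} for the factorization $J_{p^2}\sim X\times J_p$ and computationally checking the kernel criterion of Lemma \ref{lemma:MTmaps2} in Sage for each odd prime $p\leq 29$. Your reformulation of the criterion (surjectivity of the projection of $\ker M$ onto the $\widehat{T}_{E_2}$-factor) and your remark that the bound $p\leq 29$ reflects the finite, prime-by-prime nature of the verification are both consistent with the paper, which likewise only conjectures the general case (Conjecture \ref{conjec:MTp2}).
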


It seems likely that this holds more generally, and so we make the following conjecture.
\begin{conjecture}\label{conjec:MTp2}
Let $J_{p^2}$ and $X$ be defined as above. Then the projection $\MT(J_{p^2})\rightarrow \MT(X)$ is an isomorphism.
\end{conjecture}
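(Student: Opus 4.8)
The plan is to use Lemma~\ref{lemma:MTmaps2} with $n=2$ to turn the statement into an explicit membership question in a group ring, and then to settle that question by writing down the relevant quotient. Write $E_1=L=\Q(\zeta_{p^2})$ for the CM field of $X$ and $E_2=\Q(\zeta_p)$ for that of $J_p$, and set $G=\Gal(L/\Q)\cong(\Z/p^2\Z)^{\times}$, so that $H_1=\{1\}$ and $H_2:=\Gal(L/E_2)$ is the unique subgroup of $G$ of order $p$; let $\pi\colon G\to G/H_2\cong(\Z/p\Z)^{\times}$ be the quotient. Identifying $\widehat{T}_{E_1}=\widehat{T}_L$ with $\Z[G]$ and $\widehat{T}_{E_2}$ with $\Z[G/H_2]$, one reads off from Section~\ref{sec:MTnotation} that $N_1^*\phi_1^*$ is multiplication by $\theta_1:=\sum_{\sigma\in\Phi_X^*}[\sigma]\in\Z[G]$ (since $H_1=\{1\}$, the map $N_1^*$ is the identity), while $N_2^*\phi_2^*$ is $\Z[G]$-linear — here one uses that $G$ is abelian, so $\varepsilon[h']=\varepsilon$ for $h'\in H_2$, where $\varepsilon:=\sum_{h\in H_2}[h]$ — hence its image is the principal ideal generated by the image of $1\in\Z[G/H_2]$, namely $\theta_2:=\sum_{g\in\pi^{-1}(\Phi_{J_p}^*)}[g]$. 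Thus $\mathrm{Im}(N_1^*\phi_1^*)=\theta_1\Z[G]$ and $\mathrm{Im}(N_2^*\phi_2^*)=\theta_2\Z[G]$, and by the diagram chase in the proof of Lemma~\ref{lemma:MTmaps2} the projection $\MT(J_{p^2})\to\MT(X)$ is an isomorphism if and only if $\theta_2\in\theta_1\Z[G]$.

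The core of the proof would then be the identity
\[
\theta_2=\Bigl(\varepsilon-\tfrac{p-1}{2}\bigl(1+[c]\bigr)\Bigr)\theta_1 \qquad\text{in }\Z[G],
\]
where $c\in G$ is the image of $-1$; since $p$ is odd, $\varepsilon-\tfrac{p-1}{2}(1+[c])$ has integer coefficients, so this forces $\theta_2\in\theta_1\Z[G]$ and proves the conjecture. To establish the identity I would compute $\varepsilon\theta_1$. Grouping the elements of $\Phi_X^*$ by their images under $\pi$ gives $\varepsilon\theta_1=\varepsilon\sum_{\bar g\in(\Z/p\Z)^{\times}}m(\bar g)\,[\,\ell(\bar g)\,]$, where $\ell$ is a set-theoretic section of $\pi$ and $m(\bar g)=\#\bigl(\Phi_X^*\cap\pi^{-1}(\bar g)\bigr)$. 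Using the description of $\Phi_X$ in Lemma~\ref{lemma:CMtype} together with the numerical identity $\tfrac{p^2-1}{2}=p\cdot\tfrac{p-1}{2}+\tfrac{p-1}{2}$, the number of $j$ with $1\le j\le\tfrac{p^2-1}{2}$, $p\nmid j$, and $j\equiv s\pmod p$ equals $\tfrac{p+1}{2}$ when $1\le s\le\tfrac{p-1}{2}$ and $\tfrac{p-1}{2}$ otherwise; passing to reflex CM types (inversion in $\Q(\zeta_{p^2})$, cf.\ the corollary following Proposition~\ref{prop:X_simple_CMtype}) turns this into $m(\bar g)=\tfrac{p+1}{2}$ if $\bar g\in\Phi_{J_p}^*$ and $m(\bar g)=\tfrac{p-1}{2}$ otherwise. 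Substituting, and using $\varepsilon\cdot\sum_{\bar g}[\ell(\bar g)]=\sum_{g\in G}[g]=:N_G$ and $\varepsilon\cdot\sum_{\bar g\in\Phi_{J_p}^*}[\ell(\bar g)]=\theta_2$, gives $\varepsilon\theta_1=\tfrac{p-1}{2}N_G+\theta_2$; finally $N_G=\theta_1+[c]\theta_1$ because $\Phi_X^*$ is a CM type, and rearranging yields the displayed identity.

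The step I expect to be the main obstacle is the fibrewise count $m(\bar g)\in\{\tfrac{p-1}{2},\tfrac{p+1}{2}\}$ with that precise dichotomy: one must check that the ``extra'' point in a fibre of $\pi$ over $\Phi_X$ sits exactly over the residues $1,\dots,\tfrac{p-1}{2}$ — the residues cutting out the CM type of $J_p$ — and that this alignment is preserved when $\Phi_X$ and $\Phi_{J_p}$ are replaced by their reflex types. This coincidence is special to the case $q=p$, in which $\Q(\zeta_p)\subset\Q(\zeta_{p^2})$ with cyclic quotient $H_2$ of order $p$ and the CM type of $\Q(\zeta_{p^2})$ ``refines'' that of $\Q(\zeta_p)$ with a uniform bump; this is presumably why the corresponding projection for $J_{pq}$ with distinct primes $p\ne q$ need not be an isomorphism, as in the $m=21$ example of Section~\ref{sec:Example21}. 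As a byproduct, the computation identifies $\widehat{\MT}(J_{p^2})$ with the ideal $\theta_1\Z[G]\subseteq\Z[G]$, which should also pin down the embedding of $\Hg(J_{p^2})$ into $\USp(p^2-1)$, in the same spirit as the worked examples of Section~\ref{sec:DegenerateExamples}.
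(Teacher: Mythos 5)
Your argument is correct, and it does more than match the paper: the paper offers no proof of Conjecture \ref{conjec:MTp2}, only the machine verification of the kernel condition for primes $p\leq 29$ recorded in Proposition \ref{prop:MTp2}. You use the same machinery (Lemma \ref{lemma:MTmaps2} together with the character-level description of the maps $N_i^*\phi_i^*$ from Section \ref{sec:canonicalprojmap}), but you replace the case-by-case kernel computation by a closed-form identity in $\Z[G]$ valid for all odd $p$. I checked the two points on which everything rests. First, the reduction: with $E_1=L$ one has $\mathrm{Im}(N_1^*\phi_1^*)=\theta_1\Z[G]$ and $\mathrm{Im}(N_2^*\phi_2^*)=\theta_2\Z[G]$ (the latter because $\theta_2$ is $H_2$-invariant, so every $\Z[G]$-translate of $\theta_2$ is a translate by a lift of $G/H_2$), and unwinding the criterion $\widehat{T}_1+\ker(N_1^*\phi_1^*+N_2^*\phi_2^*)=\widehat{T}_{E_1}\times\widehat{T}_{E_2}$ of Lemma \ref{lemma:MTmaps2} does give exactly the condition $\theta_2\in\theta_1\Z[G]$. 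Second, the count: the number of $j$ with $1\leq j\leq (p^2-1)/2$, $p\nmid j$, $j\equiv s\pmod p$ is $(p+1)/2$ for $1\leq s\leq (p-1)/2$ and $(p-1)/2$ otherwise, and since inversion modulo $p^2$ reduces to inversion modulo $p$, the fibers of $\Phi_X^*$ of size $(p+1)/2$ lie exactly over $\Phi_{J_p}^*$; this yields $\varepsilon\theta_1=\tfrac{p-1}{2}N_G+\theta_2$ (with $N_G=\sum_{g\in G}[g]$) and, using $N_G=(1+[c])\theta_1$, your identity $\theta_2=\bigl(\varepsilon-\tfrac{p-1}{2}(1+[c])\bigr)\theta_1$, which I also verified directly for $p=3$ and $p=5$. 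One caveat worth flagging: you take reflex types to be the inverse sets, which is the standard convention and the one the paper actually uses in its $m=15$ computation ($\Phi_2^*=\{\sigma_1,\sigma_3\}$ for $\Q(\zeta_5)$); the formula in the corollary following Proposition \ref{prop:X_simple_CMtype} does not agree with this and appears to be misstated, but your argument is insensitive to the choice, since the same fiber dichotomy holds with $\Phi$ in place of $\Phi^*$. In short, your proposal upgrades Conjecture \ref{conjec:MTp2} (and Proposition \ref{prop:MTp2}) to a theorem and is worth writing up in full.
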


The next generalization we consider is $m=p^3$. As in Section  \ref{sec:Example27} we have that the Jacobian $J_{p^3}$ satisfies the following isogeny
$$J_{p^3} \sim  X_2 \times X_1 \times J_p,$$
where $X_1$ and $J_p$ are the simple abelian varieties appearing in the decomposition of $J_{p^2}$ and $X_2$ is an absolutely simple abelian variety. To find the dimension of $X_2$ we simply compute $\dim(J_{p^3})-\dim(J_{p^2})$. 

The following result is verified using the same techniques as those in Section \ref{sec:Example27}.
\begin{proposition}\label{prop:MTp3}
Let $J_{p^3}$ and $X_2$ be defined as above. Then the projection $\MT(J_{p^3})\rightarrow \MT(X_2)$ is an isomorphism for odd primes $p\leq 13$.
\end{proposition}

As we suspect for $m=p^2$, it seems likely that this holds more generally, and so we make the following conjecture.
\begin{conjecture}\label{conjec:MTp3}
Let $J_{p^3}$ and $X_2$ be defined as above. Then the projection $\MT(J_{p^3})\rightarrow \MT(X_2)$ is an isomorphism.
\end{conjecture}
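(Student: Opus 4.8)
The plan is to carry out, uniformly in $p$, the lattice computation behind Proposition~\ref{prop:MT27isomorphism} (the case $p=3$), using the machinery of Section~\ref{sec:DegneracyMT}. Write $L=\Q(\zeta_{p^3})$, $G=\Gal(L/\Q)\cong(\Z/p^3\Z)^{\times}$, and let $E_1=L$, $E_2=\Q(\zeta_{p^2})$, $E_3=\Q(\zeta_p)$ be the CM fields of the simple factors $X_2$, $X_1$, $J_p$ of $J_{p^3}$, with $H_i=\Gal(L/E_i)\le G$. First I would extend Lemma~\ref{lemma:CMtype} and the corollary following Proposition~\ref{prop:X_simple_CMtype} to $m=p^3$: the factor $X_2$ is cut out by the differentials $x^{i}\,dx/y$ with $p\nmid i+1$, so its CM-type is $\{\sigma_j:\gcd(j,p)=1,\ 1\le j\le(p^3-1)/2\}$, with reflex type $\widetilde{\Phi_1^*}$ described exactly as in the corollary. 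Applying Lemma~\ref{lemma:MTmaps2} with $i=1$, and noting that $\widehat{T}_1=\widehat{T}_{E_1}\times\{0\}\times\{0\}$ already surjects onto the first factor, the projection $\MT(J_{p^3})\to\MT(X_2)$ is an isomorphism if and only if the image of $\ker(N_1^{*}\phi_1^{*}+N_2^{*}\phi_2^{*}+N_3^{*}\phi_3^{*})$ under the projection to $\widehat{T}_{E_2}\times\widehat{T}_{E_3}$ is everything; unwinding the defining equation of that kernel, this is equivalent to the single inclusion of $\Z[G]$-submodules of $\widehat{T}_L$
\[
\operatorname{Im}(N_2^{*}\phi_2^{*})+\operatorname{Im}(N_3^{*}\phi_3^{*})\ \subseteq\ \operatorname{Im}(N_1^{*}\phi_1^{*}).
\]

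The next step is to make this inclusion explicitly algebraic. Identify $\widehat{T}_L$ with $\Z[G]\cong\Z[x]/(x^{p^2(p-1)}-1)$, free of rank one over itself, and each $\widehat{T}_{E_i}$ with $\Z[H_i\backslash G]$. Unwinding the definitions of $N_i^{*}$ and $\phi_i^{*}$ from Section~\ref{sec:MTnotation}, each composite is a $\Z[G]$-module homomorphism with image the principal ideal $\theta_i\Z[G]$, where $\theta_i=\sum_{g\in\widetilde{\Phi_i^*}}[g]$. Thus the displayed inclusion is equivalent to the two divisibilities $\theta_1\mid\theta_2$ and $\theta_1\mid\theta_3$ in $\Z[G]$. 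As a necessary first check I would confirm that $\chi(\theta_1)$ divides $\chi(\theta_2)$ and $\chi(\theta_3)$ for every character $\chi\colon G\to\overline{\Q}^{\times}$; this is a family of combinatorial identities about the reflex types (for $\chi$ trivial it is the statement $\lvert\widetilde{\Phi_1^*}\rvert=\lvert\widetilde{\Phi_2^*}\rvert=\lvert\widetilde{\Phi_3^*}\rvert=\phi(p^3)/2$), and these are precisely what the prime-by-prime verifications for $p\le13$ confirm.

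The hard part will be upgrading these ``cyclotomic-component'' divisibilities to genuine divisibility in $\Z[G]$: the natural map $\Z[G]\to\prod_{d\mid p^2(p-1)}\Z[x]/\Phi_d(x)$ is injective but not surjective, and $\Z[G]$ is far from a principal ideal domain, so one must actually exhibit $\theta_2/\theta_1$ and $\theta_3/\theta_1$ with $\Z$-coefficients. For $\theta_1\mid\theta_3$ I would exploit that $J_p$ is maximally degenerate, so that $\theta_3=N_{H_3}\,\widetilde{\theta}_3$ with $N_{H_3}=\sum_{h\in H_3}[h]$ and $\widetilde{\theta}_3$ supported on a transversal of $H_3$ in $G$, combined with the relation $\theta_1+\iota\,\theta_1=\sum_{g\in G}[g]$, where $\iota\in G$ is complex conjugation, coming from $\widetilde{\Phi_1^*}\sqcup\iota\widetilde{\Phi_1^*}=G$. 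For $\theta_1\mid\theta_2$ I would try to descend the question through the reduction map $\Z[(\Z/p^3\Z)^{\times}]\to\Z[(\Z/p^2\Z)^{\times}]$ so as to reduce to the analogous statement at level $p^2$, i.e.\ to Proposition~\ref{prop:MTp2}/Conjecture~\ref{conjec:MTp2}; if this descent can be made to work, the same argument would prove Conjecture~\ref{conjec:MTp2} uniformly in $p$ and, with more bookkeeping, treat $m=p^{k}$ for all $k$. As a sanity check, the Hodge group of $J_{p^3}$ --- computed from Shioda's exceptional cycles just as in Section~\ref{sec:Example27} --- should have dimension $\phi(p^3)/2=\dim X_2$, which is consistent with (but weaker than) the isomorphism claim.
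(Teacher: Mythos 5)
The statement you are trying to prove is not actually proved in the paper: it is stated as Conjecture~\ref{conjec:MTp3}, supported only by machine verification of the kernel computation for odd primes $p\le 13$ (Proposition~\ref{prop:MTp3}). So there is no paper proof to match; the question is whether your argument closes the gap, and it does not. Your reduction is sound as far as it goes: extending Lemma~\ref{lemma:CMtype} to $m=p^3$ to get the CM-type of $X_2$, invoking Lemma~\ref{lemma:MTmaps2} with $i=1$, and using that $G=\Gal(\Q(\zeta_{p^3})/\Q)$ is abelian so that each image $\operatorname{Im}(N_i^*\phi_i^*)$ is the principal ideal $\theta_i\Z[G]$, the conjecture is indeed equivalent to the two integral divisibilities $\theta_1\mid\theta_2$ and $\theta_1\mid\theta_3$ in $\Z[G]$. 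But those divisibilities are precisely the content of the conjecture, and your proposal leaves them unproved: the character-by-character check is only a necessary condition (and, as you note, does not imply divisibility in $\Z[G]$, where non-principality and finite-index obstructions are exactly what distinguish an isomorphism from a mere isogeny --- compare Proposition~\ref{prop:MT15isogeny}, where an index-$2$ defect really occurs in this family), the argument for $\theta_1\mid\theta_3$ via ``maximal degeneracy of $J_p$'' and the relation $\theta_1+\iota\theta_1=\sum_{g\in G}[g]$ is only a heuristic with no exhibited quotient element, and the step $\theta_1\mid\theta_2$ is explicitly conditional on a descent to level $p^2$ that itself reduces to the unproven Conjecture~\ref{conjec:MTp2}.

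In short: you have a correct and potentially useful reformulation (and a reasonable program), but the two key steps are deferred with ``I would try'' and ``if this descent can be made to work,'' so no new cases beyond the computationally verified $p\le 13$ are established. To turn this into a proof you would need to exhibit explicit elements $x_2,x_3\in\Z[G]$ with $\theta_2=\theta_1x_2$ and $\theta_3=\theta_1x_3$ (or otherwise prove the exact equality $\widehat{T}_1+\ker(N_1^*\phi_1^*+N_2^*\phi_2^*+N_3^*\phi_3^*)=\widehat{T}_{E_1}\times\widehat{T}_{E_2}\times\widehat{T}_{E_3}$ integrally, not just up to finite index), uniformly in $p$; also be aware that your claimed sanity check $\dim\Hg(J_{p^3})=\phi(p^3)/2$ is itself only verified in the paper for $p=3$ (Section~\ref{sec:Example27}).
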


We believe that these results could be generalized to $m=p^k$, for $k>1$. For values of $m$ with at least two distinct odd prime factors, it is more difficult to make a general statement regarding the canonical projection of the Mumford-Tate group onto its factors. It remains an open question of when the projection onto one or more factors is an isogeny or isomorphism.

In terms of the Hodge rings and Hodge groups,  Lemma \ref{lemma:exceptionalHodge} tells us that there are exceptional Hodge cycles in codimension $(p+1)/2$ for the Jacobian $J_m$, though there may be exceptional cycles in other codimensions as well. One can compute all of the Hodge cycles using Sage code that implements Theorem 5.2 of \cite{Shioda82} and then determine which of these are exceptional. These exceptional cycles are used to determine extra relations in the Hodge ring, which in turn give us the embedding of the identity component of the Sato-Tate group of $J_m$ in $\USp(2g)$. In the examples worked out in the preparation of this paper, there was not a clear pattern in the number or the format of the exceptional cycles. It would be interesting to work out more examples and determine if there is a way to characterize the exceptional cycles based on the factorization of $m$.

One of the initial goals for this project was to better understand the Sato-Tate groups of degenerate abelian varieties. By determining the additional relations in the Hodge group (coming from exceptional cycles in the Hodge ring), we can describe the identity component of the Sato-Tate group. Beyond this, there is still much work to be done. The first main question to consider further is: over what extension $L/\Q$ is the Sato-Tate group connected? We saw that in the case of $J_9$, $L$ is exactly the endomorphism field, but for $J_{15}$ it is a degree 2 extension of this field. Can we make a general statement about the field $L$?

The second main question is: how do we find generators of the component group of the Sato-Tate group? This is particularly unclear in the situation where the field $L$ strictly contains the endomorphism field, which also means that any results in this direction will be particularly interesting.

\section*{Acknowledgements}
The author would like to thank the anonymous reviewers for their helpful comments on an earlier draft of this article. The author also thanks Drew Sutherland and Davide Lombardo for their helpful conversations while working on Sections \ref{sec:Example9} and \ref{sec:Example15}, and Maria Fox for her feedback on an earlier draft of the Introduction.

The author was partially supported by NSF grant DMS-2201085 and by a PSC-CUNY Award, jointly funded by The Professional Staff Congress and The City University of New York. This project began while the author was a visiting research scientist at the Max-Planck-Institut f\"ur Mathematik. She is grateful for the supportive research environment they provided.

\bibliographystyle{abbrv}
\bibliography{Hodgebib}

\end{document}